\documentclass[12pt]{article}
\usepackage{amsmath,amsfonts,enumerate,amssymb,amsthm,color}
\usepackage{mathrsfs}
\usepackage{epic,eepic,epsf,epsfig}
\textheight 8.8 true in
\textwidth 6.33 true in
\hoffset -1.5 true cm
\voffset -0.7 true cm

\usepackage{multirow}
\usepackage{amsmath}
\usepackage{amssymb}
\usepackage{amsbsy}
\usepackage{graphicx}
\usepackage{amsfonts}

\newtheorem{lem}{Lemma}[section]
\newtheorem{thm}[lem]{Theorem}

\newtheorem{cor}[lem]{Corollary}

\newtheorem{prob}[lem]{Problem}
\newtheorem{prop}[lem]{Proposition}

\def\a{\alpha}\def\b{\beta}\def\d{\delta}\def\r{\rho}\def\s{\sigma}
\def\BC{{\cal BC}}
\def\C{\mathcal{C}}
\def\G{\Gamma}
\def\R{\mathfrak{R}}
\def\S{\mathfrak{S}}
\def\Z{\mathbb{Z}}
\def\un{\underline}
\def\Aut{\hbox{\rm Aut}}
\def\BiCay{\hbox{\rm BiCay}}
\def\Cay{\hbox{\rm Cay}}
\def\H{\hbox{\rm H}}
\def\GL{\hbox{\rm GL}}
\def\PSU{\hbox{\rm PSU}}

\newcommand{\sg}[1]{\langle #1 \rangle}

\begin{document}
\title{On groups all of whose Haar graphs are Cayley graphs}
\footnotetext{The first author was partially supported by the National Natural Science Foundation of China (11571035, 11231008) and by the 111 Project of China (B16002). The second author
was partially supported by the Slovenian Research Agency (research program P1-0285, and research projects N1-0032, N1-0038, J1-5433 and J1-6720). The third author was partially supported by the Project funded by China Postdoctoral Science Foundation (2016M600841).

{\em E-mail addresses:} yqfeng@bjtu.edu.cn (Y.-Q.~Feng), istvan.kovacs@upr.si (I.~Kov\'acs),
yangdawei@ math.pku.edu.cn (D.-W.~Yang). }
\author{Yan-Quan~Feng$^{\, a},$ Istv\'an~Kov\'acs$^{\, b},$
Da-Wei Yang$^{\, c}$ \\ [+1ex]
{\small\em $^a$ Department of Mathematics, Beijing Jiaotong University, Beijing, 100044, P.~R.~China} \\
{\small\em $^b$ IAM and FAMNIT, University of Primorska,
Glakolja\v{s}ka 8, 6000 Koper, Slovenia}\\
{\small\em $^c$ School of Mathematical Sciences, Peking University, Beijing, 100871, P.~R.~China}
}
\date{}
\maketitle

\begin{abstract}
A Cayley graph of a group $H$ is a finite simple graph $\G$ such
that $\Aut(\G)$ contains a subgroup isomorphic to $H$ acting regularly on $V(\G),$ while a Haar graph of $H$ is
a finite simple bipartite graph $\Sigma$ such that $\Aut(\Sigma)$ contains a subgroup isomorphic to $H$ acting semiregularly on
$V(\Sigma)$ and the $H$-orbits are equal to the bipartite sets of
$\Sigma$. A Cayley graph is a Haar graph exactly when it is bipartite, but no simple condition is known for a Haar graph to be a Cayley graph.
In this paper, we show that the groups $D_6, \, D_8, \, D_{10}$ and $Q_8$ are the only finite inner abelian groups all of whose Haar graphs are Cayley graphs (a group is called inner abelian if it is non-abelian, but all of its proper subgroups are abelian). As an application, it is also shown that every non-solvable group has a Haar graph which is not a Cayley graph.
\end{abstract}

\section{Introduction}

Let $H$ be a finite group, and let $R$, $L$ and $S$ be three subsets of $H$ such that $R^{-1}=R$, $L^{-1}=L$, and $R\cup L$
does not contain the identity element $1$ of $H$.
The {\em Cayley graph} of $H$ relative to the subset $R,$ denoted by $\Cay(H,R),$ is the graph having vertex set
$H,$ and edge set $\{ \{h,xh\} : x \in R, h \in H \};$ and the {\em bi-Cayley graph} of $H$ relative to the triple $(R, L, S)$, denoted by $\BiCay(H, R, L, S)$, is the graph having vertex set the union of the right part
$H_0=\{h_0 : h\in H\}$ and the left part $H_1=\{h_1 : h\in H\}$, and edge set being the union of the following three sets
\begin{itemize}
\item $\big\{ \{h_0,(xh)_0\} : x \in R, \, h \in H \big\}$ (right edges),
\item $\big\{ \{h_1,(xh)_1\}:  x \in L, \, h \in H \big\}$ (left edges),
\item $\big\{ \{h_0,(xh)_1\} : x \in S, \, h \in H\big\}$ (spokes).
\end{itemize}
In the particular case when $R=L=\emptyset,$ the bi-Cayley graph
$\BiCay(H, \emptyset, \emptyset, S)$ is also known as a {\em Haar graph,} denoted by $\H(H,S)$. This name is due to Hladnik et al.~\cite{HMP}, who studied Haar graphs of cyclic groups.

Symmetries of Cayley graphs have always been an active topic among algebraic combinatorists, and lately, the symmetries of bi-Cayley graphs have also received considerable attention. For various results and constructions in connection with bi-Cayley graphs and their automorphisms, we refer the reader to \cite{AHK,AKKMM,EP,LWX1,ZF,ZF1} and all the references therein. In particular, Est\'elyi and Pisanski~\cite{EP} initiated the investigation the relationship between Cayley graphs and Haar graphs. A Cayley graph is a Haar graph exactly when it is bipartite, but no simple condition is known for a Haar graph to be a Cayley graph.
An elementary argument shows that every Haar graph of every  abelian group is also a Cayley graph (this also follows from
Proposition~\ref{ZF}).  On the other hand, Lu et al.~\cite{LWX} have constructed cubic semi-symmetric graphs, that is, edge- but not vertex-transitive graphs, as Haar graphs of alternating groups.
Clearly, as these graphs are not vertex-transitive, they are examples of Haar graphs which are not Cayley graphs.
Motivated by these observations, the following problem was posed.

\begin{prob}\label{EP1}
{\rm (\cite[Problem~1]{EP})} Determine the finite non-abelian groups $H$ for which all Haar graphs  $\H(H,S)$ are Cayley graphs.
\end{prob}

Est\'elyi and Pisanski also solved the above problem for dihedral
groups, and they showed that all Haar graphs of the dihedral group
$D_{2n}$ are Cayley graphs if and only if $n=2,3,4,5$ (see \cite[Theorem~8]{EP}). We denote by
$D_{2n}$ the dihedral group of order $2n,$ and by $Q_8$ the quaternion group.

Our goal in this paper is to solve Problem~\ref{EP1} for the class of {\em inner abelian} groups. Recall that, a group $H$ is called inner abelian if $H$ is non-abelian, and all proper subgroups of $H$ are abelian. Our main result is the following theorem.

\begin{thm}\label{1}
Let $H$ be a finite inner abelian group for which all Haar graphs $\H(H,S)$ are Cayley graphs. Then $H$ is isomorphic to
$D_6, \, D_8, \, D_{10}$ or $Q_8$.
\end{thm}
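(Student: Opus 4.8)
The plan is to reduce the statement to a finite list of parametrised families via the Miller--Moreno classification of inner abelian groups, and then, for every member outside $\{D_6,D_8,D_{10},Q_8\}$, to exhibit a single connection set $S$ for which $\H(H,S)$ fails to be a Cayley graph. Recall that a finite inner abelian group $H$ is isomorphic to one of: $Q_8$; a metacyclic $p$-group $\sg{a,b\mid a^{p^m}=b^{p^n}=1,\ b^{-1}ab=a^{1+p^{m-1}}}$ with $m\ge2$; the group $\sg{a,b,c\mid a^{p^m}=b^{p^n}=c^p=1,\ [a,b]=c,\ c\in Z(H)}$; or a split extension $\sg a\rtimes\sg b\cong\Z_p\rtimes\Z_{q^n}$ with $p\ne q$ primes and $b$ inducing an automorphism of order $q$ on $\sg a$. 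In each case $H'$ has order $p$. The four exceptional groups occur here as $Q_8$, as $D_8$ (the metacyclic case $p=2$, $m=2$, $n=1$), and as $D_6,D_{10}$ (the split case with $q=2$, $n=1$, $p\in\{3,5\}$). Hence it is enough to build a non-Cayley Haar graph for every other group in these families.

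I first record the criterion that drives all the non-Cayley arguments. Throughout I take $1\in S$ and $\sg S=H$, so that $\G=\H(H,S)$ is connected and the two parts $H_0,H_1$ form the unique bipartition, preserved or interchanged by every automorphism. Write $\hat H\le\Aut(\G)$ for the semiregular group $\hat h:x_i\mapsto(xh)_i$. Since $\hat H$ is already transitive on each part, $\G$ is vertex-transitive if and only if some automorphism interchanges the two parts; as a Cayley graph is vertex-transitive, it therefore suffices to show that $\G$ admits no part-interchanging automorphism. The description of $N_{\Aut(\G)}(\hat H)$ in \cite{ZF} shows that a part-interchanging automorphism normalising $\hat H$ exists if and only if $S^\sigma=S^{-1}$ for some $\sigma\in\Aut(H)$, which is precisely the mechanism behind Proposition~\ref{ZF}. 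Consequently, if I can choose $S$ so that (i) no $\sigma\in\Aut(H)$ satisfies $S^\sigma=S^{-1}$, and (ii) $\hat H$ is normal in $\Aut(\G)$, then $\G$ has no part-swap, is not vertex-transitive, and hence is not a Cayley graph.

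The dihedral members are quickest: when $H\cong D_{2p}$ with $p\ge7$, the Haar graph constructed by Est\'elyi and Pisanski in \cite{EP} is already non-Cayley, so these are excluded while $D_6$ and $D_{10}$ are retained. For every other group I would take a cubic connection set, typically $S=\{1,a,b\}$ or $S=\{1,a,ab\}$ built from the standard generators, chosen so that $\sg S=H$ automatically. Verifying (i) is then a finite bookkeeping: an automorphism $\sigma$ with $S^\sigma=S^{-1}$ fixes $1$ and must carry $\{a,b\}$ onto $\{a^{-1},b^{-1}\}$, and one rules this out by comparing element orders and tracking the image of $[a,b]$, which generates the characteristic subgroup $H'$ of order $p$ (for instance by breaking the order symmetry when $a$ and $b$ have different orders, or by using that $ab$ and $(ab)^{-1}$ determine different cosets of $H'$ in the class-two and split families). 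This must be done uniformly across the parameters $p,q,m,n$, but in each family it is an explicit, bounded computation.

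The real difficulty is condition (ii): proving that $\hat H$ is normal in $\Aut(\G)$, since without it a part-swap could fail to normalise $\hat H$ and the reduction to $\Aut(H)$ collapses. Here I would exploit that $\G$ is cubic: by the Tutte--Djokovi\'c--Miller bounds the order of a vertex-stabiliser in a connected cubic graph is at most $48$, so $\Aut(\G)$ cannot be much larger than $\hat H$, and the bi-Cayley automorphism analysis of \cite{ZF} forces $\hat H\trianglelefteq\Aut(\G)$ unless $\G$ is one of finitely many arc-transitive cubic graphs on $2|H|$ vertices. Those finitely many exceptional graphs must then be eliminated by hand for each family, and this case analysis, rather than the construction of $S$, is where I expect the bulk of the work and the main obstacle to lie. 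Finally, the converse inclusion (that the four listed groups really do enjoy the Cayley property) is not needed for the stated implication, but would follow from \cite{EP} for $D_6,D_8,D_{10}$ and from a direct application of Proposition~\ref{ZF} to $Q_8$.
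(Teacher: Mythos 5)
Your proposal has two genuine gaps, one of which undermines the whole case analysis before it starts: your classification list is incomplete. The Miller--Moreno theorem for non-nilpotent inner abelian groups gives $H \cong \Z_p^n \rtimes \Z_{q^m}$ with an \emph{elementary abelian} normal Sylow $p$-subgroup of rank $n \geq 1$ (subject to $n < q$ and $q \mid p^n-1$), not only the case $n=1$ with cyclic $\sg{a} \cong \Z_p$ that you list; accordingly your claim that $H'$ always has order $p$ fails, since here $H' = \Z_p^n$. Thus $A_4 \cong \Z_2^2 \rtimes \Z_3$, $\Z_2^3 \rtimes \Z_7$, $\Z_2^4 \rtimes \Z_5$, and all groups $\Z_p^n \rtimes \Z_{q^m}$ with $n \geq 2$ are inner abelian groups your argument never considers. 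These are not marginal: the paper devotes two of its hardest lemmas (the cases $n>1$, $p>2$ and $n>4$, $p=2$, with connection sets of valency $5$ and $6$) precisely to this family, and its treatment of the remaining small-rank cases ($2 \le n \le 4$, $p=2$) requires separate quotient constructions ending in $A_4$-type groups.

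The second gap is your condition (ii), which you yourself flag as the main obstacle but do not close, and the tools you name will not close it. The Tutte--Djokovi\'c--Miller bound constrains vertex stabilizers of \emph{arc-transitive} cubic graphs; it does not force $\hat H \trianglelefteq \Aut(\G)$ for a cubic Haar graph, and there is no theorem in \cite{ZF} of the form ``$\hat H$ is normal unless $\G$ lies in a finite list.'' Even granting such a statement per fixed order, the exceptional graphs would vary with $|H|$ over infinitely many groups, so ``eliminate the exceptions by hand for each family'' is an unbounded task, not a proof. (Your condition (i) is also stated too weakly: a part-swapping element of $N_{\Aut(\G)}(\hat H)$ exists iff $S^\a = y^{-1}S^{-1}x$ for some $\a \in \Aut(H)$ and $x,y \in H$, which after normalization still leaves a case analysis over all $y$ with $y^{-1} \in S$, not just $y=1$; this is exactly Claim~1 in the paper's Lemma~4.1.) The paper avoids your obstacle entirely by a different mechanism: it takes connection sets of valency $4$--$6$ (not cubic), shows by transitivity-module (Schur-ring) computations that the part-preserving subgroup $\bar{A}$ acts faithfully on $H_0$ and in fact equals $R(H)$ --- or that $N_A(R(H)) \le \bar{A}$ with $R(H)$ a Sylow subgroup of $\bar A$, followed by a Frattini argument --- so that normality never needs to be assumed; and it tames the infinite parameter ranges $m,n$ with a quotient lemma: if some $H/N$ admits a non-vertex-transitive Haar graph $\bar{\G}=\H(H/N,\bar S)$ with $\bar S \ne \bar S x$ and $\bar S \ne x\bar S$ for all $x \ne 1$, then $\H(H,S) \cong \bar{\G}[nK_1]$ for $S = \bigcup_{aN \in \bar S} aN$, Sabidussi's theorem gives $\Aut(\H(H,S)) = \Aut(nK_1) \wr \Aut(\bar{\G})$, and intransitivity lifts, so $H \notin \BC$. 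That reduction to finitely many small quotients (checkable by {\sc Magma}) is the ingredient your outline is missing.
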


The rest of the paper is organized as follows.  In the next section we collect all concepts and results that will be used later. In Section~3, we give some preparatory lemmas, then prove Theorem~\ref{1} in Section~4.  We finish the paper with an application, namely, it will be shown that every non-solvable group has a Haar graph which is not a Cayley graph, and by this Problem~\ref{EP1} will be
reduced to the class of solvable groups.

\section{Preliminaries}

All groups in this paper are finite and all graphs
are finite and undirected. For a graph $\G$ we denote by
$V(\G),$ $E(\G)$ and $\Aut(\G)$ the vertex set, the edge set and the group of all automorphisms of $\G$. Given a vertex $v \in V(\G),$ we
denote by $\G(v)$ the set of vertices adjacent to $v,$ and if
$G \le \Aut(\G),$ then by $G_v$ the stabilizer of $v$ in $G$.

Let $\G=\H(H,S)$ be a Haar graph of a group $H$ with
identity element $1$. By \cite[Lemma~3.1(2)]{ZF}, up to graph isomorphism, we may always assume that $1\in S$. The graph
$\G$ is then connected exactly when $H=\sg{S}$.
For $g \in H$, the {\em right translation} $R(g)$ is the permutation
of $H$ defined by $R(g) : h \mapsto hg$ for $h \in H,$ and the
{\em left translation} $L(g)$ is
the permutation of $H$ defined by
$L(g) : h \mapsto g^{-1}h$ for $h \in H$.
Set $R(H):=\{ R(h) \, : \, h \in H\}$. It is easy to see that $R(H)$ can also be regarded as a group of automorphisms of $\H(H,S)$ acting on its vertices by the rule
$$
\forall i\in \{0,1\},~\forall h,g\in H:~
h_i^{R(g)}=(hg)_i.
$$

For an automorphism $\a \in \Aut(H)$ and $x, y, g \in H$,
define two permutations on $V(\G)=H_0 \cup H_1$ as follows
\begin{eqnarray}
\forall h\in H &:& h_0^{\d_{\a,x,y}}=(xh^\a)_1,~h_1^{\d_{\a,x,y}}=(yh^\a)_0, \label{Eq-delta} \\
\forall h\in H &:&  h_0^{\s_{\a,g}}=(h^\a)_0,~h_1^{\s_{\a,g}}=(gh^\a)_1. \label{Eq-sigma}
\end{eqnarray}
Set
\begin{eqnarray*}
{\rm I} &=& \{\d_{\a,x,y} : \a \in\Aut(H),~S^\a=y^{-1}S^{-1}x\},
\label{Eq-d}
\\
{\rm F} &=& \{\s_{\a,g} : \a \in \Aut(H),~S^\a=g^{-1}S \}.
\label{Eq-s}
\end{eqnarray*}
By~\cite[Lemma~3.3]{ZF}, ${\rm F}\leq \Aut(\G)_{1_0}$.
If $\G$ is connected, then ${\rm F}$ acts on the set $\G(1_0)$ consisting of all neighbours of $1_0$ faithfully. By~\cite[Theorem~1.1 and Lemma~3.2]{ZF}, we have the following proposition.

\begin{prop}\label{ZF}
{\rm (\cite{ZF})}
Let $\G=\H(H,S)$ be a connected Haar graph, and let $A=\Aut(\G)$.
\begin{enumerate}[(i)]
\item If ${\rm I}=\emptyset,$ then the normalizer
$N_{A}(R(H))=R(H)\rtimes {\rm F}$.
\item If ${\rm I}\neq \emptyset$,
then $N_{A}(R(H))=R(H)\sg{{\rm F},\d_{\a,x,y}}$
for $\d_{\alpha,x,y}\in {\rm I}$.
\end{enumerate}
Moreover, $\sg{R(H),\d_{\a,x,y}}$ acts transitively on $V(\G)$ for any
$\d_{\a,x,y}\in {\rm I}$.
\end{prop}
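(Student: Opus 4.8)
The plan is to describe an arbitrary $\phi\in N_A(R(H))$ through the automorphism of $H$ that conjugation by $\phi$ induces on $R(H)$, and then to read off the explicit form of $\phi$ on each bipartite set. Two structural facts come first. Since $R(g)R(h)=R(gh)$, the map $R$ is an isomorphism $H\to R(H)$, and $R(H)$ is semiregular with exactly the two orbits $H_0,H_1$ (the bipartite sets of $\G$). Conjugation permutes these orbits, so every $\phi\in N_A(R(H))$ either preserves both $H_0,H_1$ or interchanges them; and conjugation being an automorphism of $R(H)\cong H$, there is $\a\in\Aut(H)$ with $R(g)^{\phi}:=\phi^{-1}R(g)\phi=R(g^{\a})$ for all $g\in H$.

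Consider first the part-preserving case. Post-multiplying $\phi$ by a suitable element of $R(H)\le N_A(R(H))$, I may assume $1_0^{\phi}=1_0$. Writing $h_i=1_i^{R(h)}$ and using $R(h)\phi=\phi R(h^{\a})$ gives
\[
h_0^{\phi}=(1_0^{\phi})^{R(h^{\a})}=(h^{\a})_0,\qquad h_1^{\phi}=(1_1^{\phi})^{R(h^{\a})},
\]
so, setting $1_1^{\phi}=g_1$, the map $\phi$ is exactly $\s_{\a,g}$. Preservation of the edges $1_0\sim s_1$ ($s\in S$) forces $gS^{\a}=S$, that is $S^{\a}=g^{-1}S$, whence $\phi\in{\rm F}$. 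Conversely ${\rm F}\le\Aut(\G)_{1_0}$ normalizes $R(H)$; since elements of ${\rm F}$ fix $1_0$ while $R(H)$ acts freely, $R(H)\cap{\rm F}=1$, and as $R(H)\trianglelefteq N_A(R(H))$ the part-preserving elements of $N_A(R(H))$ form precisely $R(H)\rtimes{\rm F}$.

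The same computation in the part-swapping case, with $1_0^{\phi}=x_1$ and $1_1^{\phi}=y_0$, yields
\[
h_0^{\phi}=(x h^{\a})_1,\qquad h_1^{\phi}=(y h^{\a})_0,
\]
so $\phi=\d_{\a,x,y}$, and the edge condition now reads $yS^{\a}=S^{-1}x$, i.e.\ $S^{\a}=y^{-1}S^{-1}x$, giving $\phi\in{\rm I}$. Hence a part-swapping normalizing element exists if and only if ${\rm I}\neq\emptyset$, which proves (i): when ${\rm I}=\emptyset$ the normalizer is part-preserving, equal to $R(H)\rtimes{\rm F}$. For (ii), fix $\d_{\a,x,y}\in{\rm I}$; a direct check shows $\d_{\a,x,y}$ normalizes $R(H)$, and since the part-swapping normalizing elements form a single coset of the index-at-most-$2$ subgroup $R(H)\rtimes{\rm F}$ (if $\phi,\psi$ both swap parts then $\phi\psi^{-1}$ preserves them), we get $N_A(R(H))=(R(H)\rtimes{\rm F})\sg{\d_{\a,x,y}}=R(H)\sg{{\rm F},\d_{\a,x,y}}$. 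Finally $\sg{R(H),\d_{\a,x,y}}$ is transitive on $V(\G)$, because $R(H)$ is already transitive on each of $H_0,H_1$ while $\d_{\a,x,y}$ interchanges the two parts.

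The crux—rather than a genuine difficulty—is the clean identification in the two displayed formulas: the single automorphism $\a$ arising from the conjugation action on $R(H)$ must simultaneously govern the behaviour of $\phi$ on both bipartite sets, which is what pins $\phi$ down to $\s_{\a,g}$ or $\d_{\a,x,y}$ from just the two values $1_0^{\phi}$ and $1_1^{\phi}$. Once this is set up, matching the graph-automorphism condition to the defining relations of ${\rm F}$ and ${\rm I}$ is routine. I expect no essential use of connectivity in the normalizer computation itself; connectivity (equivalently $H=\sg{S}$) is what makes ${\rm F}$ act faithfully on $\G(1_0)$ and frames the relevance of the transitivity conclusion.
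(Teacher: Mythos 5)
Your proof is correct, but there is nothing in the paper to compare it against line by line: the paper gives no proof of this proposition at all, importing it verbatim from \cite[Theorem~1.1 and Lemma~3.2]{ZF} and recording beforehand only the fact ${\rm F}\leq \Aut(\G)_{1_0}$ from \cite[Lemma~3.3]{ZF}. What you have done is reconstruct the standard argument behind the cited result: $R(H)$ is semiregular with orbits $H_0,H_1$; any $\phi\in N_A(R(H))$ permutes these orbits and induces by conjugation an automorphism $\a$ of $H$; the relation $R(h)\phi=\phi R(h^\a)$ then determines $\phi$ on all of $V(\G)$ from the two values $1_0^\phi$ and $1_1^\phi$, forcing $\phi$ into $R(H)\,{\rm F}$ in the part-preserving case and into ${\rm I}$ in the part-swapping case, with the defining conditions on $S$ falling out of edge-preservation at $1_0$. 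This is exactly the right decomposition, and your closing observation is also accurate: connectivity plays no role in the normalizer computation itself and is needed only so that ${\rm F}$ acts faithfully on $\G(1_0)$.

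One step is left genuinely implicit, though it is bookkeeping rather than a missing idea. In part (ii), to place $\d_{\a,x,y}\in{\rm I}$ inside $N_A(R(H))$ you verify that it normalizes $R(H)$, but ${\rm I}$ is defined as a set of permutations, not of automorphisms, so you must also verify that the condition $S^\a=y^{-1}S^{-1}x$ makes $\d_{\a,x,y}$ an automorphism of $\G$: the edge $\{h_0,(sh)_1\}$ goes to $\{(ys^\a h^\a)_0,(xh^\a)_1\}$, and writing $(s^\a)^{-1}=x^{-1}ty$ for some $t\in S$ gives $x(s^\a)^{-1}y^{-1}=t\in S$, so this image is again an edge. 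Without this check, your equivalence ``a part-swapping normalizing element exists if and only if ${\rm I}\neq\emptyset$'' is proved only in the forward direction, and the inclusion $R(H)\sg{{\rm F},\d_{\a,x,y}}\leq N_A(R(H))$ needed for (ii) is not yet established. Everything else, including the coset argument giving $N_A(R(H))=(R(H)\rtimes{\rm F})\sg{\d_{\a,x,y}}$ and the transitivity of $\sg{R(H),\d_{\a,x,y}}$, is complete and correct.
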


Let $H$ be a permutation group on a finite set $\Omega$.
For convenience, let $\Omega=\{1,\ldots,n\}$. Let $G$ be a permutation group on a finite set $\Delta,$ and let $N=G \times \cdots \times G$ with $n$ factors. We define the action of $H$ on $N$ by letting
$$
\forall g_i \in G,~\forall h\in H:~(g_1,\ldots,g_n)^h=(g_{1^{h^{-1}}},\ldots,g_{n^{h^{-1}}}).
$$

The semidirect product of $N$ by the group $H$ with respect to the above action is called the {\em wreath product} of $G$ and $H,$  denoted by  $G \wr H$. The group $G \wr H$ can be viewed as a
permutation group of the set $\Omega \times \Delta,$ by letting
the element $(g_1,\ldots,g_n;h) \in G \wr H$ act as
$$
\forall~(i,\d) \in \Omega \times \Delta:~(i,\d)^{(g_1,\ldots,g_n;h)}=(i^h,\d^{g_i}).
$$

Notice that, if $H$ is intransitive on $\Omega,$ then the group
$G \wr H$ is obviously intransitive on $\Omega \times \Delta$. This observation will be used in the next section.

Given two graphs $\G_1$ and $\G_2,$ the {\em lexicographical product}
$\G_1[\G_2]$ is defined to be the graph with vertex set
$V(\G_1) \times V(\G_2),$ and two vertices $(u_1,u_2)$ and
$(v_1,v_2)$ are adjacent in $\G_1[\G_2]$  if and only if $u_1=v_1$ and
$u_2$ is adjacent to $v_2$ in $\G_2,$ or $u_1$ is adjacent to $v_1$ in $\G_1$.  In view of \cite[Theorem]{S}, we have the following
proposition.

\begin{prop}\label{S}
{\rm (\cite{S})} Let $\G_1$ and $\G_2$ be two graphs. Then
$\Aut(\G_1[\G_2])=\Aut(\G_2) \wr \Aut(\G_1)$ if and only if the following conditions hold:
\begin{enumerate}[(i)]
\item If there exist two distinct vertices $u,v \in V(\G_1)$ such that
$\G_1(u)=\G_1(v),$ then $\G_2$ is connected.
\item If there exist two distinct vertices $u,v \in V(\G_1)$ such that
$\G_1(u) \cup \{u\}=\G_1(v) \cup \{v\},$ then the complement $(\G_2)^c$ of the graph $\G_2$ is connected.
\end{enumerate}
\end{prop}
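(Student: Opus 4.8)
The plan is to prove both directions by reducing everything to a single statement about the \emph{fibres} $F_u:=\{u\}\times V(\G_2)$, $u\in V(\G_1)$, of $\Sigma:=\G_1[\G_2]$. First I would record the easy inclusion $\Aut(\G_2)\wr\Aut(\G_1)\le\Aut(\Sigma)$: an element $\big((\gamma_u)_{u\in V(\G_1)};\b\big)$ sends $(u,x)$ to $(u^\b,x^{\gamma_u})$, and a direct check against the definition of the lexicographical product shows it preserves adjacency. Next I would observe that this wreath product is \emph{exactly} the setwise stabiliser in $\Aut(\Sigma)$ of the fibre partition $\mathcal F=\{F_u:u\in V(\G_1)\}$: any automorphism preserving $\mathcal F$ induces a permutation $\b$ of $V(\G_1)$, which is forced to lie in $\Aut(\G_1)$ because adjacency between distinct fibres mirrors adjacency in $\G_1$, and it acts inside each fibre by a permutation forced to lie in $\Aut(\G_2)$ because the subgraph induced on a fibre is a copy of $\G_2$. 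Thus the identity $\Aut(\Sigma)=\Aut(\G_2)\wr\Aut(\G_1)$ is equivalent to the single assertion that \emph{every} automorphism of $\Sigma$ preserves $\mathcal F$, and the proposition becomes: this holds if and only if (i) and (ii) hold.

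For the ``only if'' direction I would argue by contraposition, exhibiting an automorphism that breaks a fibre whenever a condition fails. The crucial remark is that if $u\ne v$ satisfy $\G_1(u)=\G_1(v)$ (respectively $\G_1(u)\cup\{u\}=\G_1(v)\cup\{v\}$), then $F_u\cup F_v$ is a module of $\Sigma$, meaning every vertex outside it is adjacent to all or to none of it; consequently any permutation of $F_u\cup F_v$ that is an automorphism of the induced subgraph extends by the identity to an automorphism of $\Sigma$. When $\G_1(u)=\G_1(v)$ and $u\not\sim v$, this induced subgraph is the disjoint union of two copies of $\G_2$; if $\G_2$ is disconnected it has at least four components, and swapping a single component of the copy on $F_u$ with the corresponding component of the copy on $F_v$ yields an automorphism whose image of $F_u$ meets both fibres, so it moves part but not all of $F_u$ and hence lies outside the wreath product. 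The failure of (ii) is handled by the complementary construction: when $u\sim v$ the subgraph induced on $F_u\cup F_v$ is the join of two copies of $\G_2$, and a disconnected $(\G_2)^c$ supplies the analogous partial swap.

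The heart of the matter, and the step I expect to be the main obstacle, is the ``if'' direction: assuming (i) and (ii), show that no automorphism can break a fibre. Here I would analyse neighbourhoods directly. Two vertices $(u,x),(u,y)$ in a common fibre satisfy $\Sigma\big((u,x)\big)\triangle\Sigma\big((u,y)\big)\subseteq F_u$, since outside $F_u$ their adjacencies are both governed by $\G_1(u)$; this localised behaviour is what distinguishes same-fibre pairs. The danger is a pair $(u,x),(v,y)$ with $u\ne v$ sharing essentially the same neighbourhood, and the computation shows this can occur only if $u,v$ are twins in $\G_1$ \emph{and} either $x,y$ are isolated vertices of $\G_2$ (when $u\not\sim v$) or universal vertices of $\G_2$ (when $u\sim v$). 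Condition (i) forbids the former, since a connected $\G_2$ has no isolated vertex, and condition (ii) forbids the latter, since a connected $(\G_2)^c$ has no universal vertex of $\G_2$. With these degeneracies excluded one recovers the fibres as an automorphism-invariant feature of the module structure of $\Sigma$, so every automorphism must permute them. The delicate point, and where the combinatorial bookkeeping is concentrated, is to promote the local neighbourhood comparison into a clean invariant characterisation of whole fibres rather than merely of the twin classes contained inside them; I would finally dispose of the trivial boundary cases $|V(\G_2)|=1$ and $|V(\G_1)|=1$, where $\Sigma$ reduces to $\G_1$ or $\G_2$ and both the desired identity and conditions (i), (ii) hold vacuously.
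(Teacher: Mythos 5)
Your framework is the right one, and a good part of it is sound: the identification of $\Aut(\G_2)\wr\Aut(\G_1)$ with the setwise stabiliser in $\Aut(\Sigma)$ of the fibre partition of $\Sigma=\G_1[\G_2]$ is correct, and your ``only if'' direction is complete --- when (i) or (ii) fails, $F_u\cup F_v$ is a module inducing the disjoint union (resp.\ the join) of two copies of $\G_2$, and swapping one connected component of one copy (resp.\ one component of the complement) with its mate extends by the identity to an automorphism that breaks the fibres. For the record, the paper offers no proof to compare against: Proposition~\ref{S} is quoted directly from Sabidussi~\cite{S}. That, however, is beside the main point, which is that your ``if'' direction has a genuine gap.

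What you actually prove in the ``if'' direction is that conditions (i) and (ii) exclude \emph{cross-fibre twins}: pairs $(u,x)$, $(v,y)$ with $u\ne v$ having equal open (or closed) neighbourhoods in $\Sigma$. You then assert that ``with these degeneracies excluded one recovers the fibres as an automorphism-invariant feature,'' explicitly deferring the remaining bookkeeping. This inference is not merely unproved; it is false. Take $\G_1=2K_1$ and $\G_2=2K_2$, so that $\Sigma=\G_1[\G_2]=4K_2$. Here $\Sigma$ has no cross-fibre twins at all: the only twins in $4K_2$ are the two endpoints of each $K_2$, and each $K_2$ lies inside a single fibre (consistently with your own computation, since $\G_2$ has no isolated vertex). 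Yet $\Aut(\Sigma)$, of order $2^4\cdot 24=384$, permutes the four copies of $K_2$ freely and does not preserve the two fibres, whereas $\Aut(\G_2)\wr\Aut(\G_1)$ has order $8^2\cdot 2=128$. (Condition (i) fails for this pair, so the example is consistent with the proposition; what it refutes is your proposed bridge from ``no cross-fibre twins'' to ``every automorphism preserves the fibres.'') The reduction of (i) and (ii) to twin-exclusion is lossy --- connectivity of $\G_2$ is strictly stronger than the absence of isolated vertices --- and the lost strength is exactly what the theorem needs: connectivity of $\G_2$ (resp.\ of $(\G_2)^c$) must be used globally, to show that entire fibres, not just twin classes, are recoverable as invariants of $\Sigma$. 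This is the substance of Sabidussi's proof, and it is precisely the step you flagged as ``the delicate point'' and left out; as written, the hard half of the proposition is missing.
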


Let $\Z H$ denote the group ring of $H$ over the ring
of integers. We denote by $\cdot$ the usual multiplication and by $\circ$ the {\em Schur-Hadamard} multiplication of $\Z H,$ that is,
\begin{eqnarray*}
\sum_{h\in H}c_h h \cdot \sum_{h\in H}d_h h &=&
\sum_{h\in H}\big(\sum_{g \in H}c_gd_{g^{-1}h} \big)h, \\
\sum_{h\in H}c_h h \circ \sum_{h\in H}d_h h &=&
\sum_{h\in H}(c_hd_h) h.
\end{eqnarray*}
Given a subset $S \subseteq H,$ let $\un{S}$ denote the $\Z H$-element $\sum_{h\in S}h$.  After Wielandt~\cite{W}, we call such elements \emph{simple quantities} (see \cite[p.~54]{W}).

Let $G$ be a permutation group of $H$ such that $R(H) \le G,$ and let $G_1$ denote the stabilizer of the identity element $1$ in $G$.
Schur~\cite{Sch} proved that the $\Z$-module spanned by all simple quantities $\un{X}$ where $X$ runs over the set of all $G_1$-orbits is a subring of $\Z H$ (also see~\cite[Theorem~24.1]{W}). This
$\Z$-module is called the {\em transitivity module} of $G_1,$  denoted by $\R(H,G_1)$. The simple quantity $\un{X}$ for a $G_1$-orbit $X$, will also be called a {\em basic quantity} of $\R(H,G_1)$. A couple of properties of transitivity modules are listed in the proposition below.
In fact, the statements in (i)-(iii) are from \cite[Propositions~22.1, 22.4 and 23.6]{W}.

\begin{prop}\label{P-tmodule1}
{{\rm (\cite{W})}}
Let $\S=\R(H,G_1)$ be a transitivity module of a group $G_1$.
The following properties hold:
\begin{enumerate}[(i)]
\item If $\sum_{h \in H}c_h h \in \S$ and $c \in \Z,$ then the simple
quantity $\un{\{h \in H : c_h=c \}} \in \S$.
\item $\S$ is closed under the Schur-Hadamard product of $\Z H$.
\item If $\un{S} \in \S$ for a subset $S \subseteq H,$ then
$\un{\sg{S}} \in \S$.
\end{enumerate}
\end{prop}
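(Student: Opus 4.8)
The plan is to extract from the definition the one structural fact that drives all three parts: every element of $\S$ has coefficients that are constant on the $G_1$-orbits. By definition $\S$ is the $\Z$-span of the basic quantities $\un{X_0},\dots,\un{X_r}$, where $X_0,\dots,X_r$ are the $G_1$-orbits (say $X_0=\{1\}$), and these orbits partition $H$. Hence any $a=\sum_{h\in H}c_h h\in\S$ is uniquely $a=\sum_{i=0}^r a_i\un{X_i}$ with $a_i\in\Z$, which forces $c_h=a_i$ for all $h\in X_i$. Two consequences I would record at once: $\un{H}=\sum_i\un{X_i}\in\S$, and the coefficient function $h\mapsto c_h$ is constant on each orbit.

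For part (i), fix $c\in\Z$ and put $Y=\{h\in H:c_h=c\}$. Since $c_h$ is constant on each $X_i$, the set $Y$ is exactly the union of those orbits on which the common coefficient equals $c$, so $\un{Y}=\sum_{i:\,a_i=c}\un{X_i}$ is a $\Z$-combination of basic quantities and lies in $\S$. For part (ii) it suffices to check closure on the basis: the coefficient of $h$ in $\un{X_i}\circ\un{X_j}$ is $[h\in X_i][h\in X_j]=[h\in X_i\cap X_j]$, and since distinct orbits are disjoint we get $\un{X_i}\circ\un{X_j}=\delta_{ij}\un{X_i}\in\S$; closure under $\circ$ then follows by bilinearity. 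This also yields the tool I will reuse: if $a\in\S$ then $\un{\mathrm{supp}(a)}\in\S$, where $\mathrm{supp}(a)=\{h:c_h\neq 0\}$, because $\mathrm{supp}(a)$ is the disjoint union of the finitely many nonzero level sets supplied by part (i).

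Part (iii) is the substantive one, and here I would invoke Schur's theorem, that $\S$ is closed under the ordinary product $\cdot$. Given $\un{S}\in\S$, every power $\un{S}^{\cdot k}$ lies in $\S$, and the coefficient of $h$ in $\un{S}^{\cdot k}$ counts the ways of writing $h$ as a product of $k$ elements of $S$; thus $\mathrm{supp}(\un{S}^{\cdot k})=S^{(k)}:=\{s_1\cdots s_k:s_i\in S\}$, and the support tool gives $\un{S^{(k)}}\in\S$ for every $k\ge 1$. The key observation is that in the finite group $H$ the subsemigroup $\bigcup_{k\ge 1}S^{(k)}$ generated by $S$ is actually a subgroup, hence equals $\sg{S}$: each element $t$ has finite order $m$, so $1=t^m$ and $t^{-1}=t^{m-1}$ already belong to it. Therefore the increasing chain $U_N=\bigcup_{k=1}^N S^{(k)}$ stabilizes, for $N$ large, at $\sg{S}$. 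Applying the support tool to $\sum_{k=1}^N\un{S^{(k)}}\in\S$ shows $\un{U_N}\in\S$, and taking $N$ large enough gives $\un{\sg{S}}\in\S$.

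I expect the only real obstacle to be isolating the correct finiteness argument in part (iii): recognizing that the generated subsemigroup coincides with the generated subgroup, so that $\sg{S}$ is reached after finitely many ordinary products, and then packaging the union-of-supports step through part (i). Parts (i) and (ii) are essentially bookkeeping with the orbit partition, while part (iii) fuses that bookkeeping with Schur's ring closure.
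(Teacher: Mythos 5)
Your proof is correct, but there is nothing in the paper to compare it against: the paper does not prove Proposition~\ref{P-tmodule1} at all, it simply quotes it from Wielandt's book (Propositions~22.1, 22.4 and 23.6 of \cite{W}). What you have reconstructed is essentially the standard argument from that source, made self-contained. Parts (i) and (ii) are, as you say, pure bookkeeping once one notes that every element of $\S$ has coefficients constant on the $G_1$-orbits, so level sets are unions of orbits and $\un{X_i}\circ\un{X_j}=\delta_{ij}\un{X_i}$. For (iii), your appeal to Schur's theorem (that $\S$ is closed under the ordinary product $\cdot$) is legitimate here, since the paper states exactly this fact, with attribution to Schur, immediately before the proposition; combining it with your ``support tool'' from (i), the identification of $\mathrm{supp}(\un{S}^{\cdot k})$ with the set $S^{(k)}$ of $k$-fold products, and the observation that in a finite group the semigroup generated by $S$ already contains $1=t^m$ and $t^{-1}=t^{m-1}$ for each element $t$ of order $m$ (hence equals $\sg{S}$), you get a complete proof. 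The one step that needs care, the stabilization of the increasing chain $U_N=\bigcup_{k=1}^N S^{(k)}$ at $\sg{S}$, you handle correctly by finiteness. So the proposal fills in, correctly and by the expected route, a proof the paper delegates to its reference.
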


The next properties are widely used and also easy to show,
nevertheless, for easier reading we present a proof.

\begin{prop}\label{P-tmodule2}
Let $\S=\R(H,G_1)$ be a transitivity module, and $S \subseteq H$ be
a subset such that $\un{S} \in \S$. The following properties hold:
\begin{enumerate}[(i)]
\item $\un{S^{-1}}  \in \S$.
\item If $\un{\{h\}} \in \S$ for some $h\in H$ and $\un{S}$ is a basic quantity of $\S,$ then $\un{h S}$ and $\un{S h}$ are also basic quantities of $\S$.
\item $\sg{S}$ is block of imprimitivity for $G$.
\item If $\un{\{h\}} \in \S,$ then $L(h) \in C_{S_H}(G)$.
\end{enumerate}
\end{prop}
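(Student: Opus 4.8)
The plan is to work throughout with the standard bijection between the $G_1$-orbits on $H$ and the \emph{orbitals} of $G$, i.e.\ the orbits of $G$ in its diagonal action on $H\times H$. Since $R(H)\le G$ acts regularly, every $g\in G$ can be written as $g=\gamma R(a)$ with $a=1^g$ and $\gamma=gR(a)^{-1}\in G_1$, and a short computation then shows that the orbital through $(1,x_0)$ equals
\[
\Delta_X=\{(a,xa):a\in H,\ x\in X\}=\{(u,v):vu^{-1}\in X\},
\]
where $X=x_0^{G_1}$. I would also use the elementary remark that, since distinct basic quantities are supported on disjoint $G_1$-orbits while $\un S$ has all coefficients in $\{0,1\}$, the hypothesis $\un S\in\S$ forces $S$ to be a union of $G_1$-orbits, and in particular $G_1$-invariant.

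For (i) and (iv) I would simply read off the relevant orbital. The transpose $\Delta_X^{*}=\{(v,u):(u,v)\in\Delta_X\}$ is again an orbital of $G$, because swapping coordinates commutes with the diagonal action, and rewriting the defining condition gives $\Delta_X^{*}=\Delta_{X^{-1}}$; hence $X^{-1}$ is a $G_1$-orbit whenever $X$ is, and applying this to each orbit appearing in $S$ yields $\un{S^{-1}}\in\S$. For (iv), the hypothesis $\un{\{h\}}\in\S$ says exactly that $\{h\}$ is a $G_1$-orbit, so $\Delta_{\{h\}}=\{(a,ha):a\in H\}$ is the graph of the map $a\mapsto ha$, that is, of $L(h^{-1})$. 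The $G$-invariance of this orbital reads $(ha)^g=h\,(a^g)$ for all $a\in H$ and all $g\in G$, which is precisely the assertion that $L(h^{-1})$ commutes with $g$; thus $L(h^{-1})\in C_{S_H}(G)$ and therefore $L(h)=L(h^{-1})^{-1}\in C_{S_H}(G)$. I expect the crux, and the only genuine obstacle, to be this identification in (iv): recognising the singleton orbital as the graph of a left translation, so that its invariance becomes a centralising relation.

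Part (ii) would then follow from (i) and (iv). A permutation centralising $G$ (hence $G_1$) carries each $G_1$-orbit onto a $G_1$-orbit, so applying $L(h^{-1}):x\mapsto hx$ to the orbit $S$ shows that $hS=S^{L(h^{-1})}$ is a $G_1$-orbit, i.e.\ $\un{hS}$ is a basic quantity. For $Sh$ I would combine this with (i): by (i) the set $S^{-1}$ is a $G_1$-orbit, hence so is $h^{-1}S^{-1}=(S^{-1})^{L(h)}$ since $L(h)\in C_{S_H}(G)$, and then $Sh=(h^{-1}S^{-1})^{-1}$ is a $G_1$-orbit by a second application of (i). Finally, for (iii) I would invoke Proposition~\ref{P-tmodule1}(iii) to get $\un{\sg S}\in\S$, so that the subgroup $U=\sg S$ is a union of $G_1$-orbits and hence $G_1$-invariant; writing $g=\gamma R(a)$ as above gives $U^g=(U^{\gamma})^{R(a)}=U^{R(a)}=Ua$. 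As $U$ is a subgroup, the sets $Ua$ are its right cosets and partition $H$, so $U^g$ is either equal to or disjoint from $U$. Hence $U=\sg S$ is a block of imprimitivity for $G$, which completes the plan.
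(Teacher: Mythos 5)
Your proof is correct; all four parts go through, and there is no circularity in your order of deduction, since your (iv) uses only the orbital correspondence and not (i) or (ii). Part (i) is essentially the paper's own argument: both identify $\un{S^{-1}}$ via the paired orbital, you by transposing $\Delta_X=\{(u,v):vu^{-1}\in X\}$, the paper by applying $R(x^{-1})$ to the pair $(x,1)$. The remaining parts are organized genuinely differently. The paper proves (ii) entirely inside the ring $\Z H$: $\un{hS}=\un{\{h\}}\cdot\un{S}\in\S$ because $\S$ is a subring, and any basic quantity $\un{T}$ with $T\subseteq hS$ must have $T=hS$, since otherwise $h^{-1}T$ would be a proper nonempty subset of $S$ supporting an element of $\S$, contradicting that $\un{S}$ is basic. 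It proves (iv) by a coordinate computation (write $R(x)g=g_1R(y)$ with $g_1\in G_1$ and use that $G_1$ fixes $h^{-1}$), and for (iii) it notes that $G_{\{K\}}=G_1R(K)\geq G_1$ and invokes \cite[Theorem~1.5A]{DM}, i.e.\ that the orbit of a point under an overgroup of its stabilizer is a block. You instead make (iv) the engine: the singleton orbital $\{(a,ha):a\in H\}$ is the graph of $L(h^{-1})$, so its $G$-invariance \emph{is} the centralizing relation, and (ii) then follows because a permutation centralizing $G_1$ carries $G_1$-orbits to $G_1$-orbits; your (iii) replaces the citation by the elementary remark that every $G$-image of $U=\sg{S}$ is a right coset $Ua$, and distinct right cosets are disjoint. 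What each approach buys: yours is more unified and self-contained (everything flows from the orbital picture, with no external block theorem), while the paper's proof of (ii) stays within the S-ring formalism — an argument that applies verbatim to abstract S-rings, where no underlying group $G$ is available — and its (iii) and (iv) are shorter given the cited standard facts.
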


\begin{proof}
(i): It is well-known that there is a one-to-one correspondence between  the $G_1$-orbits on $H$ and the $G$-orbits on $H \times H$ given as
follows: If $\Delta$ is the $G$-orbit of $(h,k) \in H \times H,$ then the
corresponding $G_1$-orbit is equal to the set
$\{x \in H : (1,x) \in \Delta\}$. Denote the latter set by $T,$ so
in other words, $\un{T}$ is a basic quantity of $\S$. Let $T'$ be
the $G_1$-orbit corresponding to the $G$-orbit of $(k,h)$ on $H \times H$. Then $T'$
can be expressed as $T'=\{x \in H : (x,1) \in \Delta \}$. On the other
hand, using that $R(H) \le G,$ we can write
$$
(x,1) \in \Delta \iff (1,x^{-1})=(x,1)^{R(x^{-1})} \in \Delta
\iff x \in T^{-1}.
$$
This shows that $T'=T^{-1}$. Since $\un{S}=\sum \un{T_i}$ for
some basic quantities $\un{T_i},$  by the previous observation we get
$\un{S^{-1}}=\sum \un{T_i^{-1}} \in \S,$ and (i) follows.

(ii): In the group ring $\Z H$ it holds $\un{h S}=\un{\{h\}} \cdot
\un{S},$ and as $\S$ is a subring of $\Z H,$ $\un{h S}
\in \S$. Choose a basic quantity $\un{T}$ of $\S$ with $T\subseteq hS$.  By (i), $\un{h^{-1}} \in \S$.
If $T\ne h S,$ then $h^{-1}T \subsetneq S,$ contradicting that
$\un{S}$ was chosen to be a basic quantity, and (ii) follows for $h S$.
The proof of the other statement with $S h$ goes in the same way.

(iii): Let $K=\sg{S}$. By Proposition~\ref{P-tmodule1}~(iii),
$\un{K}\in \S,$ or in other words, $G_1$ fixes $K$ setwise. Let
$G_{\{K\}}$ denote the setwise stabilizer of $K$ in $G$. Then
$G_{\{K\}}=G_1R(K)$. In particular, $G_1 \le G_{\{K\}},$
and the $G_{\{K\}}$-orbit of $1$ is a block of imprimitivity for $G$
(see \cite[Theorem~1.5A]{DM}). As the latter orbit is $K,$ (iii) follows.

(iv): Let $x \in H$ and $g \in G$.  Observe that, we can choose
elements $y \in H$ and $g_1\in G_1$ such that $R(x) g=g_1 R(y)$.
By (i), $\un{\{h^{-1}\}} \in \S,$ and hence $(h^{-1})^{g_1}=h^{-1}$.
Thus, $(h^{-1}x)^g=(h^{-1})^{g_1 R(y)}=h^{-1}y$ and $x^g=1^{g_1 R(y)}=y$. These give $(h^{-1}x)^g=h^{-1}x^g,$ that is, $L(h)$ commutes with $g$.
\end{proof}

We would like to remark that the transitivity module $\R(H,G_1)$ is an example of the so called {\em S-rings} over the group $H$. For more information on these rings, we refer the reader to
\cite[Chapter~IV]{W}, and for a survey on various
applications of S-rings in algebraic graph theory,  we refer
to \cite{MP}.

\section{Properties of Haar Cayley graphs}

In this section, we give two lemmas about groups all of whose Haar  graphs are Cayley graphs. We introduce the following
notation which will be used throughout the paper.
$$
\BC=\big\{ H \; \text{is a finite group} : \forall S \subseteq H,~~\H(H,S) \; \text{is a Cayley graph} \big\}.
$$

\begin{lem}\label{L1}
The class $\BC$ is closed under taking subgroups.
\end{lem}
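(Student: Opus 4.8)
The plan is to relate a Haar graph of the subgroup, via a disjoint-union decomposition, to a Haar graph of the whole group, and then transport the Cayley property down. Fix a subgroup $K\le H$ and a subset $S\subseteq K$; the goal is to show $\H(K,S)$ is a Cayley graph whenever $H\in\BC$. First I would observe that, since $S\subseteq K$, every spoke $\{h_0,(xh)_1\}$ of $\H(H,S)$ joins two vertices whose labels $h$ and $xh$ lie in the same right coset $Kh$ (because $x\in S\subseteq K$). Hence $\H(H,S)$ splits as the disjoint union, over the right cosets $Kh$, of the subgraphs induced on $(Kh)_0\cup (Kh)_1$, each of which the right translation $R(h^{-1})$ carries isomorphically onto $\H(K,S)$. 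Writing $m\,\Gamma$ for the disjoint union of $m$ copies of a graph $\Gamma$, this yields $\H(H,S)\cong |H:K|\cdot \H(K,S)$.

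Next, using $H\in\BC$, I would note that $\H(H,S)$ is a Cayley graph, hence vertex-transitive, so all of its connected components are mutually isomorphic to a single connected graph $\Delta$. Thus $\H(K,S)\cong c\,\Delta$ and $\H(H,S)\cong n\,\Delta$ for integers with $c\mid n$, and the whole problem reduces to the self-contained claim: if $n\,\Delta$ is a Cayley graph for a connected graph $\Delta$, then $\Delta$ is a Cayley graph (and then so is $c\,\Delta$ for every $c\ge 1$).

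To prove the claim, the easy direction is that $\Delta=\Cay(G,R)$ gives $c\,\Delta\cong\Cay(G\times\Z_c,\{(r,0):r\in R\})$, since there two vertices $(g,i),(g',j)$ are adjacent exactly when $i=j$ and $g'g^{-1}\in R$. For the converse I would suppose $n\,\Delta=\Cay(G,R)$, so $G$ acts regularly on the vertices. The $n$ components form a $G$-invariant partition, and since $G$ is transitive on vertices it is transitive on components; let $G_0$ be the setwise stabiliser of one component $\Delta_0$, so $|G_0|=|G|/n=|V(\Delta_0)|$. Restriction to $\Delta_0$ embeds $G_0$ in $\Aut(\Delta_0)$ faithfully (a nontrivial element fixing $\Delta_0$ pointwise would fix a vertex, contradicting regularity of $G$), and the same reason shows $G_0$ acts semiregularly on $V(\Delta_0)$; comparing orders, it acts regularly. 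Hence $\Delta\cong\Delta_0$ is a Cayley graph of $G_0$, which proves the claim and therefore that $\H(K,S)\cong c\,\Delta$ is a Cayley graph; as $S\subseteq K$ was arbitrary, $K\in\BC$.

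The main obstacle is precisely this descent step. The subtlety is that vertex-transitivity of $\H(H,S)$ alone only delivers vertex-transitivity of $\Delta$, which is strictly weaker than being a Cayley graph; one genuinely needs the \emph{regular} group $G$ supplied by the Cayley structure, together with the order count $|G_0|=|V(\Delta_0)|$ and semiregularity, to force the component-stabiliser $G_0$ to act regularly. Everything else—the coset decomposition and the construction of a Cayley presentation for a disjoint union of copies—is routine.
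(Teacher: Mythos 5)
Your proof is correct and takes essentially the same route as the paper's: decompose the Haar graph into connected components, descend the regular subgroup of $\Aut(\H(H,S))$ to a single component via its setwise stabilizer (the paper phrases this as a block-of-imprimitivity argument, you phrase it via orbit--stabilizer counting plus semiregularity, which is the same mechanism), and then rebuild a regular group on $\H(K,S)$ as a direct product of a cyclic group permuting the components with a regular group on one component. The only cosmetic difference is that you justify the component decomposition explicitly through right cosets of $K$, whereas the paper simply notes that $\H(K,S)$ is a union of components of $\H(H,S)$, each isomorphic to $\H(\sg{S},S)$.
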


\begin{proof}
Let $H$ be a group in the class $\mathcal{BC}$, and let $K\leq H$.
Let $\H(K,S)$ be a Haar graph of $K$ for some subset $1\in S\subseteq K$. It is sufficient to prove that $\H(K,S)$ is a Cayley graph.
Note that $\H(K,S)$ is a union of some components of $\H(H,S)$
each being isomorphic to $\H(\sg{S},S)$.
Denote by $\G$ a component of $\H(K,S)$.
Then $\G \cong \H(\sg{S},S)$ and $V(\G)$
is a block of imprimitivity for $\Aut(\H(H,S))$.

First, we prove that $\G$ is a Cayley graph.
Since $H\in \cal{BC}$, the Haar graph $\H(H,S)$ is
a Cayley graph, and thus $\Aut(\H(H,S))$ has a subgroup $R$ acting regularly on $V(\H(H,S))$.
Since $V(\G)$ is a block of imprimitivity, the setwise stabilizer
$R_{\{V(\G)\}}$
is transitive on $V(\G)$. This implies that
$R_{\{V(\G)\}}$ is regular on $V(\G),$ and thus
$\G$ is a Cayley graph.

Now, we prove that $\H(K,S)$ is a Cayley graph.
Noting that each component of $\H(K,S)$ is isomorphic to
$\G$, assume that $\H(K,S)$ has $m$ components, and
one may identify $\{(i,u)~|~1\leq i\leq m, u\in V(\G)\}$ and
$\{\{(i,u),(i,v)\}~|~1\leq i\leq m, \{u,v\}\in E(\G)\}$ with the vertex set and the edge set of $\H(K,S)$, respectively. Since $\G$ is a Cayley graph,
$\Aut(\G)$ has a regular subgroup on $V(\G)$, say $G$.
Write $\sigma=(1\ 2\ \cdots\ m)$, the cyclic permutation on $\{1,2,\ldots,m\}$.
Then it is easy to see that the group $\sg{\sigma}\times G\cong\Z_m\times G$
acts regularly on $V(\H(K,S))$ with the action given by
$$(i,u)^{(\a,\b)}\mapsto (i^{\a},u^{\b}),~\forall (i,u)\in V(\H(K,S)),$$
where $(\a,\b)\in \sg{\sigma}\times G$. Hence $\H(K,S)$
is a Cayley graph, as required.
\end{proof}

\begin{lem}\label{L2}
Let $H$ be a group, $N \trianglelefteq H$ be a normal subgroup, and
$\bar{S} \subseteq H/N$ be a subset  such that
\begin{enumerate}[(i)]
\item $\H(H/N,\bar{S})$ is not vertex-transitive;  and
\item $\bar{S} \ne \bar{S} x$ and $\bar{S} \ne x \bar{S}$ for all non-identity element $x \in H/N$.
\end{enumerate}
Then $H$ does not belong to $\BC$.
\end{lem}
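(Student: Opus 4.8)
The plan is to exhibit an explicit witness subset $S\subseteq H$ for which $\H(H,S)$ fails to be vertex-transitive; this suffices, since every Cayley graph is vertex-transitive. The natural choice is the full preimage $S=\pi^{-1}(\bar S)$ under the canonical projection $\pi\colon H\to H/N$ (after translating so that $1\in\bar S$, hence $1\in S$). Writing $\Sigma=\H(H/N,\bar S)$ and $n=|N|$, the first step is to recognise $\H(H,S)$ as a lexicographical product. Since $S$ is a union of $N$-cosets, the adjacency $h_0\sim g_1$ in $\H(H,S)$ holds iff $gh^{-1}\in S$, i.e. iff $\pi(g)\in\bar S\,\pi(h)$, which depends only on the images $\pi(h),\pi(g)$; moreover $\H(H,S)$ is bipartite with no edges inside $H_0$ or $H_1$. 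Fixing for each coset a bijection with $\{1,\dots,n\}$ then yields a graph isomorphism $\H(H,S)\cong\Sigma[\overline{K_n}]$, where $\overline{K_n}$ denotes the edgeless graph on $n$ vertices.

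Next I would compute $\Aut(\Sigma[\overline{K_n}])$ via Proposition~\ref{S} applied with $\G_1=\Sigma$ and $\G_2=\overline{K_n}$. Condition (ii) of that proposition is automatic, because $(\overline{K_n})^c=K_n$ is connected. Condition (i) would fail for $n>1$, as $\overline{K_n}$ is itself disconnected, \emph{unless} $\Sigma$ has no two distinct vertices with identical neighbourhoods; this is exactly where hypothesis (ii) of the lemma enters. In $\Sigma$ one computes $\Sigma(\bar h_0)=(\bar S\,\bar h)_1$ and $\Sigma(\bar h_1)=(\bar S^{-1}\bar h)_0$. Two vertices in the part $(H/N)_0$ share a neighbourhood precisely when $\bar S\,\bar h=\bar S\,\bar k$, i.e. $\bar S=\bar S x$ for $x=\bar k\bar h^{-1}\neq 1$; two vertices in $(H/N)_1$ share a neighbourhood precisely when $\bar S=x\bar S$ for some $x\neq 1$; and a vertex of part $0$ and a vertex of part $1$ cannot, since their neighbourhoods lie in opposite bipartition classes and are nonempty ($\bar S\neq\emptyset$ by hypothesis (ii)). The two displayed equalities are forbidden by hypothesis (ii), so $\Sigma$ is twin-free, condition (i) of Proposition~\ref{S} holds vacuously, and we obtain $\Aut(\H(H,S))=\Aut(\overline{K_n})\wr\Aut(\Sigma)$.

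Finally, hypothesis (i) of the lemma states that $\Sigma$ is not vertex-transitive, so $\Aut(\Sigma)$ is intransitive on $V(\Sigma)$. By the observation on wreath products recorded in Section~2, the group $\Aut(\overline{K_n})\wr\Aut(\Sigma)$ is then intransitive on $V(\Sigma)\times\{1,\dots,n\}=V(\H(H,S))$. Hence $\H(H,S)$ is not vertex-transitive, so it is not a Cayley graph, and therefore $H\notin\BC$. I expect the main technical point to be the twin-free verification of the preceding paragraph: one must check carefully, separately on each bipartition class, that hypothesis (ii) is precisely the condition needed to render condition (i) of Proposition~\ref{S} vacuous, the subtlety being that $\overline{K_n}$ is disconnected and so that condition cannot be satisfied directly.
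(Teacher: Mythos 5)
Your proposal is correct and takes essentially the same route as the paper: the same witness set $S=\pi^{-1}(\bar S)$, the same identification $\H(H,S)\cong\H(H/N,\bar S)[nK_1]$, the same appeal to Proposition~\ref{S} (with hypothesis (ii) of the lemma used exactly to rule out twins in $\H(H/N,\bar S)$, making condition (i) of that proposition vacuous while condition (ii) holds since $K_n$ is connected), and the same conclusion via intransitivity of the wreath product. The only difference is cosmetic: you spell out the cross-bipartition twin case and the nonemptiness of $\bar S$, which the paper leaves implicit.
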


\begin{proof} Let $S=\bigcup_{aN\in \bar{S}} aN$, a subset of $H$.
Let $\G=\H(H,S)$ and $\bar{\G}=\H(H/N,\bar{S})$.
It is straightforward to show that $\G \cong \bar{\G}[nK_1]$ where $n=|N|$. Since the complement $(nK_1)^c$ of $nK_1$ is $K_n$ that is connected, by Proposition~\ref{S}, the equality
$\Aut(\G) =  \Aut(nK_1) \wr \Aut(\bar{\G})$ holds if
\begin{equation}\label{Eq-uv}
\forall u,v \in V(\bar{\G}):~
\bar{\G}(u)=\bar{\G}(v) \Rightarrow u = v.
\end{equation}

Suppose that $\bar{\G}(u)=\bar{\G}(v)$ for the vertices $u=x_i$ and $v=y_j,$ where $x, y \in H/N$ and $i,j \in \{0,1\}$.
It is easy to see that this implies that $i=j=0$ and $\bar{S} x=\bar{S} y,$ or $i=j=1$ and $\bar{S}^{-1} x = \bar{S}^{-1} y$. Thus,  $\bar{S}=\bar{S} x y^{-1}$ or $\bar{S}=x y^{-1} \bar{S}$. By (ii), $x=y$ and so $u=v$, which means that Eq.~\eqref{Eq-uv} holds.

By Proposition~\ref{S}, $\Aut(\G) =  \Aut(nK_1) \wr \Aut(\bar{\G})$.  Since $\Aut(\bar{\G})$ is not transitive on $V(\bar{\G})$,  we have that $\Aut(\G)$ is not transitive on
$V(\G)$. In particular, $\G$ is not a Cayley graph, and hence $H$ is
not in $\BC$.
\end{proof}

We finish the section with a corollary of Lemmas~\ref{L1}  and
\ref{L2} which will be useful when we deal with Haar graphs of
non-solvable groups.

\begin{cor}\label{C-of-L1L2}
Let $H$ be a group with normal series
$$
1=H_0 \unlhd  H_1 \unlhd  \cdots \unlhd  H_{n-1} \unlhd H_n=H.
$$
If for some $i \in \{0,\ldots,n-1\},$ there exists a subset $R \subset H_{i+1}/H_i$ such that
\begin{enumerate}[(i)]
\item $\H(H_{i+1}/H_i,R)$ is not vertex-transitive; and
\item $R \ne R x$ and $R \ne x R$ for all non-identity element $x \in H_{i+1}/H_i$.
\end{enumerate}
Then $H$ does not belong to $\BC$.
\end{cor}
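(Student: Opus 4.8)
The plan is to deduce Corollary~\ref{C-of-L1L2} from Lemmas~\ref{L1} and \ref{L2} by applying the latter to a suitable subgroup of $H$ together with a suitable normal subgroup of that subgroup, and then invoking the contrapositive of Lemma~\ref{L1}. The key observation is that the hypotheses (i) and (ii) on $R \subset H_{i+1}/H_i$ are exactly hypotheses (i) and (ii) of Lemma~\ref{L2}, provided we read them with the roles $H \rightsquigarrow H_{i+1}$, $N \rightsquigarrow H_i$, and $H/N \rightsquigarrow H_{i+1}/H_i$. So the quotient $H_{i+1}/H_i$ is precisely the group $H/N$ appearing in Lemma~\ref{L2}, with $\bar{S}=R$.

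First I would fix the index $i$ furnished by the hypothesis and set $K=H_{i+1}$ and $N=H_i$. Since the given chain is a normal series, we have $H_i \unlhd H_{i+1}$, that is $N \trianglelefteq K$; this is the only structural fact about the series we actually need. Applying Lemma~\ref{L2} to the group $K$, its normal subgroup $N$, and the subset $\bar{S}=R \subseteq K/N = H_{i+1}/H_i$ — whose hypotheses are met by (i) and (ii) — we conclude that $K=H_{i+1} \notin \BC$.

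Next I would use Lemma~\ref{L1}, which says $\BC$ is closed under taking subgroups. Its contrapositive states that if a subgroup of a group $G$ fails to lie in $\BC$, then $G$ itself fails to lie in $\BC$. Since $H_{i+1} \le H$ and $H_{i+1} \notin \BC$, we obtain $H \notin \BC$, which is the desired conclusion.

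I do not expect any genuine obstacle here: the corollary is a packaging of the two lemmas, and the whole argument is a two-line invocation once the dictionary $(H,N) \leftrightarrow (H_{i+1},H_i)$ is made explicit. The only point requiring a word of care is confirming that $H_i \trianglelefteq H_{i+1}$ follows from the series being normal (each $H_j \unlhd H_{j+1}$), which is immediate, and that $R$ is genuinely a subset of the quotient group $H_{i+1}/H_i$ so that Lemma~\ref{L2} applies verbatim.
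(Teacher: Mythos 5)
Your proof is correct and is exactly the argument the paper intends: the corollary is stated as an immediate consequence of Lemmas~\ref{L1} and \ref{L2}, obtained by applying Lemma~\ref{L2} to $H_{i+1}$ with normal subgroup $H_i$ and $\bar{S}=R$, and then using the subgroup-closure of $\BC$ from Lemma~\ref{L1}. No discrepancies to report.
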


\section{Proof of Theorem~\ref{1}}

Throughout this section $H$ denotes an inner abelian group.
In our first lemma we consider the case when $H$ is a $p$-group.
R\'edei~\cite{R} proved that $H$ is then isomorphic to one of the following groups:
\begin{itemize}
\item The quaternion group $Q_8$,
\item $M_p(m,n)=\sg{ a,b,c \, \mid \, a^{p^m}=b^{p^n}=c^p=1,[a,b]=c=a^{p^{m-1}}}$ with $m \geq 2$ and $n \geq 1$,
\item $M_p(m,n,1)=\sg{ a,b,c \, \mid \, a^{p^m}=b^{p^n}=c^p=1, [a,b]=c, [c,a]=[c,b]=1}$ with $m \geq n$ and if $p=2$ then $m+n \geq 3$.
\end{itemize}
For the groups $H=M_p(m,n)$ or $M_p(m,n,1),$
it is easy to find that $[a,b]=c \in Z(H),$ the center of $H$. Hence $[a^i,b^j]=[a,b]^{ij}=c^{ij}$ (see~\cite[Lemma~2.2.2]{G}), and $a^ib^j=b^ja^ic^{ij}$, where $i \in \Z_{p^m}$ and $j \in \Z_{p^n}$.
\medskip

\begin{lem}\label{L-p}
Suppose that $H=M_p(m,n)$ or $M_p(m,n,1)$. Let $p\geq 3$ or $p=2$ with $m \ge 3$ and $n=1$.
Then, the Haar graph $\G=\H(H,S)$ is not vertex-transitive for
$S=\{1,a,a^{-1},b,ab\}$.
\end{lem}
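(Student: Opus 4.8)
The plan is to prove that no automorphism of $\G$ interchanges the two parts $H_0$ and $H_1$ of its bipartition. Since $\sg{S}=\sg{a,b}=H$, the graph $\G$ is connected, so $\{H_0,H_1\}$ is its unique bipartition and every automorphism either fixes both classes setwise or swaps them. As $R(H)$ is already transitive on each of $H_0$ and $H_1$, the graph $\G$ is vertex-transitive precisely when some automorphism swaps $H_0$ and $H_1$. It therefore suffices to exhibit an $\Aut(\G)$-invariant quantity taking different values on the two parts.

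The invariant I will use is the \emph{codegree}, i.e.\ the number $|\G(u)\cap\G(v)|$ of common neighbours of two vertices $u,v$ in the same part. Reading off the three defining edge-sets gives, for vertices in a common part,
$$
|\G(g_0)\cap\G(h_0)| = \big[\un{S^{-1}}\cdot\un{S}\big]_{hg^{-1}}, \qquad
|\G(g_1)\cap\G(h_1)| = \big[\un{S}\cdot\un{S^{-1}}\big]_{hg^{-1}},
$$
where $[\cdot]_w$ denotes the coefficient of $w\in H$. Since $[\un{S^{-1}}\cdot\un{S}]_w=|S\cap Sw|$ and $[\un{S}\cdot\un{S^{-1}}]_w=|S\cap w^{-1}S|$, the $H_0$-codegrees involve the \emph{right} translates of $S$ and the $H_1$-codegrees the \emph{left} translates; for abelian $H$ these agree, which is exactly why Haar graphs of abelian groups are vertex-transitive. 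By transitivity of $R(H)$ on each part, the codegree multiset of $H_0$ is $\{\,|S\cap Sw| : 1\ne w\in H\,\}$ and that of $H_1$ is $\{\,|S\cap wS| : 1\ne w\in H\,\}$. A swap of the two parts would carry $H_0$-pairs to $H_1$-pairs preserving common neighbours, hence force these two multisets to coincide; so it suffices to show they differ.

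Both multisets are computed by counting, for each $w$, the representations $w=t^{-1}s$, respectively $w=st^{-1}$, with $s,t\in S$. In the generic case, where $|a|\ge 4$ and $|b|\ge 3$ (which holds for all $p\ge 3$ except the group $M_3(1,1,1)$ of order $27$), this bookkeeping gives: every codegree inside $H_0$ is at most $2$, whereas inside $H_1$ the vertices $1_1$ and $a_1$ already have the three common neighbours $1_0,\,a_0,\,(b^{-1})_0$. Thus the largest codegree differs between the two parts, and $\G$ is not vertex-transitive.

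The remaining work, and the one delicate point, is to handle the groups in which some of these products collide and thereby change the multiplicities. Under the hypotheses this occurs in two regimes. If $p=3$ and $|a|=3$ (the group $M_3(1,1,1)$), the relations $a^2=a^{-1}$ and $a^{-2}=a$ merge products, and recomputing the two multisets yields largest codegree $3$ inside $H_0$ and $4$ inside $H_1$. If $p=2$ (so $n=1$ and $b^2=c^2=1$), the relations $b^{-1}=b$ and $c^{-1}=c$ merge products on the $H_0$-side, and one finds largest codegree $4$ inside $H_0$ (attained by $1_0$ and $b_0$) but only $3$ inside $H_1$. In every case the extreme codegree is pushed up on exactly one of the two sides, so the $H_0$- and $H_1$-codegree multisets stay distinct; the constraints $p\ge 3$, or $p=2$ with $m\ge 3$ and $n=1$, are precisely what confine the collisions to these controlled patterns. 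I expect the main obstacle to be this product bookkeeping across the degenerate cases, that is, checking that the two multisets cannot accidentally agree, the reduction itself being routine.
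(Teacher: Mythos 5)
Your proof is correct, and it takes a genuinely different route from the paper's. The paper argues through the structure of $\Aut(\G)$: writing $\bar{A}$ for the subgroup preserving the two parts, it first proves $N_A(R(H))\le\bar{A}$ by solving $S^\a=y^{-1}S^{-1}$ over $\a\in\Aut(H)$ and $y\in H$ (the contradictions are exactly collisions like $c=a^{-2}$, which would force the excluded parameters $(p,m)=(2,2)$), then proves $A=\bar{A}$ by showing $R(H)$ is a full Sylow $p$-subgroup of $\bar{A}$ via an analogous analysis of $S^\a=g^{-1}S$ and invoking the Frattini argument, and it delegates the base case $M_3(1,1,1)$ to {\sc Magma}. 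You replace all of this by a single combinatorial invariant: the codegree multisets of the two parts, governed by right translates $|S\cap Sw|$ on $H_0$ and left translates $|S\cap wS|$ on $H_1$. I verified your counts: generically the maximal codegrees are $2$ versus $3$ (the pair $1_1,a_1$); for $M_3(1,1,1)$ they are $3$ versus $4$; for $p=2$, $n=1$, $m\ge 3$ they are $4$ (the pair $1_0,b_0$) versus $3$; and the coincidence checks this bookkeeping requires (ruling out $a^3=1$, $c=a^{-2}$, $b^2\in\sg{a,c}$, which is precisely where your hypotheses and the excluded groups such as $M_2(2,n)$ enter) all go through. What your approach buys: it is elementary and self-contained, never touches $\Aut(H)$, needs neither Proposition~\ref{ZF} nor any Sylow/Frattini step, and settles $M_3(1,1,1)$ by hand rather than by computer. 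What the paper's heavier approach buys: the analysis of the sets ${\rm I}$ and ${\rm F}$ and of $N_A(R(H))$ developed there is the template reused in Lemmas~\ref{L-faithful}--\ref{L-n>4p=2}, where the part-preserving subgroup must be pinned down much more precisely (via transitivity modules) than a codegree comparison can achieve. One presentational caveat: your write-up asserts the outcomes of the product bookkeeping rather than exhibiting the $25$-product tables and the coincidence analysis; since these checks are exactly where the hypotheses on $(p,m,n)$ are consumed, a complete version should display them.
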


\begin{proof}
If $(p,m)=(3,1)$, then $H\cong M_3(1,1,1)$,
and we check by {\sc Magma}~\cite{BCP} that the Haar graph $\H(H,S)$
is not vertex-transitive. Hence we assume that $(p,m)\neq (3,1)$.

Let $A=\Aut(\G)$ and let $\bar{A}$ be the maximal subgroup of $A$ that fixes the two bipartite sets of $\G$ setwise. We settle the lemma
in two steps.
\medskip

\noindent{\bf Claim~1.} The normalizer $N_A(R(H)) \le \bar{A}$.
\medskip

Suppose to the contrary that $N_A(R(H)) \nleq \bar{A}$.
By Proposition~\ref{ZF}, there exists a permutation $\d_{\a,x,y} \in N_A(R(H))$ for some $\a \in \Aut(H)$ and $x,y \in H$
such that $S^{\a}=y^{-1}S^{-1}x$. Since
$R(H)$ acts transitively on $H_1,$ we may further assume that $1_0^{\d_{\a,x,y}}=1_1$. By Eq.~\eqref{Eq-delta}, $1_0^{\d_{\a,x,y}}=(x1^{\a})_1=1_1,$ it follows that $x=1,$ and thus $S^{\a}=y^{-1}S^{-1}$. We can write
\begin{equation}\label{Eq-p-gp-1}
\{1^{\a},a^{\a},(a^{-1})^{\a},b^{\a},(ab)^{\a}\}=
y^{-1}\{1,a^{-1},a,b^{-1}, b^{-1}a^{-1}\},
\end{equation}
implying that $y=1$, $a^{-1}$, $a$, $b^{-1}$ or $b^{-1}a^{-1}$.
Recall that $a^ib^j=b^ja^ic^{ij}$ for $i \in\Z_{p^m}$ and $j \in \Z_{p}$.

Let $y=1$. Then $\{a^{\a},(a^{-1})^{\a},b^{\a},(ab)^{\a}\}=
\{a^{-1},a,b^{-1}, b^{-1}a^{-1}\}$ by Eq.~(\ref{Eq-p-gp-1}).
If $a^{\a}=b^{-1}$, then $(a^{-1})^{\a}=b\in \{a^{-1},a, b^{-1}a^{-1}\}$, which is impossible. Similarly,
$a^{\a}\neq b^{-1}a^{-1}$. Hence $\{a^{\a},(a^{-1})^{\a}\}=
\{a^{-1},a\}$ and $\{b^{\a},(ab)^{\a}\}=
\{b^{-1},b^{-1}a^{-1}\}$.
It follows that $a^{\a}=(ab)^{\a}\cdot (b^{-1})^{\a}=b^{-1}a^{-1}\cdot b=a^{-1}c^{-1}$ or $b^{-1}\cdot ab=ac$, and since
$a^{\a}\in \{a^{-1},a\}$ and the order of $c$ in $H$ is $p$, we have $c=a^{-2}$, forcing that $(p,m)=(2,2)$, contrary to the assumption that $m \geq 3$ if $p=2$.

Let $y=a^{-1}$ or $a$. By Eq.~(\ref{Eq-p-gp-1}), we have
$\{a^{\a},(a^{-1})^{\a},b^{\a},(ab)^{\a}\}=
\{a,a^2,ab^{-1}, b^{-1}c^{-1}\}$
or $\{a^{-1},a^{-2},a^{-1}b^{-1}, b^{-1}a^{-2}c\}$, respectively.
By a similar argument as above, we have
$\{a^{\a},(a^{-1})^{\a}\}=\{a,a^2\}$ or $\{a^{-1},a^{-2}\}$,
forcing that $p=3$ and $m=1$, contrary to the assumption that
$(p,m)\neq (3,1)$. Let $y=b^{-1}$. Then $\{a^{\a},(a^{-1})^{\a},b^{\a},(ab)^{\a}\}=
\{b,ba^{-1},ba,a^{-1}\}$, while the latter set cannot contain
both $a^{\a}$ and $(a^{-1})^{\a}$, a contradiction.
Let $y=b^{-1}a^{-1}$. Then
$\{a^{\a},(a^{-1})^{\a},b^{\a},(ab)^{\a}\}=
\{ab,bc,ba^2c,a\}$, and $\{a^{\a},(a^{-1})^{\a}\}=\{bc,ba^2c\}$.
Hence $bc\cdot ba^2c=b^2a^2c^2=1$ and $(p,m,n)=(2,1,1)$, a contradiction.
\medskip

To show that $\G=\H(H,S)$ is not vertex-transitive it is sufficient to
prove the following statement.
\medskip

\noindent{\bf Claim~2.} $A=\bar{A}$.
\medskip

Note that $R(H)$ is a $p$-subgroup of $\bar{A}$. Let $P$ be a
Sylow $p$-subgroup of $\bar{A}$ containing $R(H)$.
Assume for the moment that $P \ne R(H)$.
Then $R(H) < N_P(R(H))$ (see~\cite[Theorem~1.2.11(ii)]{G}).
By Claim~1 and Proposition~\ref{ZF}, we have
$N_P(R(H)) \le R(H)\rtimes {\rm F}$, where
${\rm F}=N_A(R(H))_{1_0}$.
Since $\sg{S}=H$, the Haar graph $\G$
is connected, and thus ${\rm F}$
acts faithfully on the neighborhood
$\G(1_0)=\{1_1,a_1,(a^{-1})_1,b_1,(ab)_1\}$.
Since $\G$ has valence $5,$ $F \le S_5,$ and we have $p=2, \, 3$ or $5$. Since $|{\rm F}|\neq 1,$ by Eq.~\eqref{Eq-sigma},
there exists a $\s_{\a,g}\in {\rm F}$
of order $p$ for some $\a \in \Aut(H)$ and $g \in H$
such that $S^{\a}=g^{-1}S$, that is,
\begin{equation}\label{eq=p-gp-2}
\{1^{\a},a^{\a},(a^{-1})^{\a},b^{\a},(ab)^{\a}\}=
g^{-1}\{1,a,a^{-1},b, ab\},
\end{equation}
implying that $g=1$, $a$, $a^{-1}$, $b$ or $ab$.

Let $g=1$. By Eq.~\eqref{eq=p-gp-2}, we have
$\{a^{\a},(a^{-1})^{\a},b^{\a},(ab)^{\a}\}=\{a,a^{-1},b, ab\}$,
and hence $\{a^{\a},(a^{-1})^{\a}\}=\{a,a^{-1}\}$
and $\{b^{\a},(ab)^{\a}\}=\{b, ab\}$.
If $b^{\a}=b$ and $(ab)^{\a}=ab$, then
$a^{\a}=(ab)^{\a}\cdot (b^{-1})^{\a}=ab\cdot b^{-1}=a$.
It is easy to check that $\s_{\a,1}$ fixes each neighbor of
$1_0$, and since $\sg{ \sigma_{\a,1}} \leq {\rm F}$
acts on the neighbors of $1_0$ faithfully, $\sigma_{\a,1}=1$,
a contradiction. If $b^{\a}=ab$ and $(ab)^{\a}=b$, then
$a^{\a}=a^{-1}$, and $\sigma_{\a,1}$
interchanges $a_1$ and $(a^{-1})_1$, forcing that
$\sigma_{\a,1}$ has order $2$ and $p=2$.
Since $b^{\a}=ab$ and $b^{2^n}=b^2=1$, we have $(ab)^2=b^2a^2c^3=1$, which is impossible.

Let $g=a$ or $a^{-1}$.
Then $\{a^{\a},(a^{-1})^{\a},b^{\a},(ab)^{\a}\}=\{a^{-1},a^{-2},a^{-1}b, b\}$
or $\{a,a^{2},ab, a^2b\}$, respectively.
It follows that $\{a^{\a},(a^{-1})^{\a}\}=\{a^{-1},a^{-2}\}$
or $\{a,a^2\}$, and hence $a^3=1$ and $(p,m)=(3,1)$, a contradiction.
Similarly, if $g=b$ or $ab$, then $\{a^{\a},(a^{-1})^{\a},b^{\a},(ab)^{\a}\}=\{b^{-1},b^{-1}a,b^{-1}a^{-1}, ac\}$ or $\{b^{-1}a^{-1},b^{-1},b^{-1}a^{-2}, a^{-1}c^{-1}\}$, and
none of them can contain both $a^{\a}$ and $(a^{-1})^{\a}$ in the same time, a contradiction. We conclude that
$R(H)=P$.

Since $\langle S\rangle=H$, $\Gamma$ is connected and so $|A:\bar{A}|=2$, implying that $\bar{A}\unlhd A$. The Frattini argument (see~\cite[Theorem~1.3.7]{G}) together with Claim~1 yields $A=\bar{A}N_A(R(H))=\bar{A},$ which completes the proof of the lemma.
\end{proof}

Now, we turn to the case when $H$ is not a $p$-group.
Miller and Moreno~\cite{MM} proved that
$$
H \cong \Z_p^n \rtimes \Z_{q^m}
$$
for distinct primes $p$ and $q$.
For the next four lemmas we set $P$ and $Q$ to be a Sylow $p$- and
$q$-subgroup of $H,$ respectively, and $b$ for a generator of $Q$. Furthermore, let $S_1 \subseteq P$ be a
subset such that $1 \in S_1$.
By \cite[Theorem~5.2.3]{G}, $P=C_P(Q) \times [P,Q]$. Since $H$ is inner  abelian, it follows that $H=[P,Q]Q,$ implying that $C_P(Q)=1$.
Also, as $P\sg{b^q} < H,$ the group $P\sg{b^q}$ is abelian.
We conclude that $b$ acts on $P$ as a fixed point free automorphism of order $q$. Therefore, for any non-identity element $a \in P,$ the
$Q$-orbit of $a$ can be written as $a^Q=\{a,a^b,\ldots,a^{b^{q-1}}\}$. Since $C_P(Q)=1,$ $\sg{a^Q,Q}$ is non-abelian, and thus
$\sg{a^Q}=P$. Observe that, since the product
$aa^b \cdots a^{b^{q-1}}$ is fixed by $b,$ it is equal to the identity
$1,$ hence the rank of $P$ is at most $q-1$.
All these yield the following conditions:
\begin{equation}\label{Eq-H}
H=\sg{a,b}, \, n < q \text{ and } q \mid (p^n-1).
\end{equation}
Finally, we set $\G=\H(H,S_1 \cup \{b\})$ with $1\in S_1\subseteq P,$  $A=\Aut(\G)$ and
$\bar{A} \le A$ for the maximal subgroup that fixes the two bipartite sets of $\G$ setwise.

\begin{lem}\label{L-faithful}
With the above notation, suppose that $q > 2$. If either  $S_1=S_1^{-1}$, or
$|S_1|=4,$ $|S_1 \cap S_1^{-1}|=3$ and $\sg{S_1}=p^2,$ then
$\bar{A}$ acts faithfully on $H_0$.
\end{lem}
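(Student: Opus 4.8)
The plan is to show that the kernel of the action of $\bar A$ on $H_0$ is trivial. Since $\bar A$ preserves the bipartition, an element $\kappa\in\bar A$ lying in this kernel fixes every vertex of $H_0$ and merely permutes $H_1$. Being a graph automorphism, $\kappa$ preserves neighbourhoods, so it can only send a vertex $h_1$ to another vertex having the \emph{same} neighbourhood inside $H_0$. From the spoke description of $\H(H,S)$ with $S=S_1\cup\{b\}$ one reads off $\G(h_1)=(S^{-1}h)_0$, so two vertices $h_1,h'_1$ share their neighbourhood exactly when $S^{-1}h=S^{-1}h'$; conversely, any permutation of $H_1$ respecting this relation and fixing $H_0$ pointwise is an automorphism lying in $\bar A$. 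Hence the kernel is trivial if and only if the map $h\mapsto S^{-1}h$ is injective, that is, if and only if the right stabiliser $\{g\in H:S^{-1}g=S^{-1}\}$ is trivial. This reduction turns the lemma into a purely multiplicative statement about $S$.

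To prove the stabiliser is trivial, first note that $1\in S_1\subseteq S$ forces $1\in S^{-1}$, so any $g$ with $S^{-1}g=S^{-1}$ satisfies $g=1\cdot g\in S^{-1}=S_1^{-1}\cup\{b^{-1}\}$; thus either $g\in P$ or $g=b^{-1}$. The key structural fact is that, modulo the normal subgroup $P$, the set $S^{-1}$ meets exactly two cosets: the identity coset $P$ (which contains all of $S_1^{-1}$, in particular $1$) and the coset $Pb^{-1}$, which it meets in the \emph{single} element $b^{-1}$. I would exploit this singleton by tracking, for a candidate $g$, how $S^{-1}g$ distributes over the cosets of $P$ in $H/P\cong\Z_{q^m}$.

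If $g\in P$, then $S_1^{-1}g\subseteq P$ while $b^{-1}g\in Pb^{-1}$, so the $Pb^{-1}$-part of $S^{-1}g$ is the singleton $\{b^{-1}g\}$; matching it against the $Pb^{-1}$-part $\{b^{-1}\}$ of $S^{-1}$ gives $b^{-1}g=b^{-1}$, whence $g=1$. If instead $g=b^{-1}$, then $S^{-1}g=S_1^{-1}b^{-1}\cup\{b^{-2}\}$ lies entirely in the cosets $Pb^{-1}$ and $Pb^{-2}$; this is where $q>2$ enters, since then $b^2\neq1$ gives $b^{-2}\notin P$ and $b^{-2}\notin Pb^{-1}$, so $S^{-1}g$ misses the identity coset $P$ altogether, contradicting $1\in S^{-1}$. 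Hence $g=1$ in both cases, the stabiliser is trivial, and $\bar A$ acts faithfully on $H_0$.

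The main obstacle is conceptual rather than computational: getting the reduction right, namely the precise equivalence between faithfulness on $H_0$ and injectivity of $h\mapsto S^{-1}h$ (equivalently, the absence of ``twin'' vertices in $H_1$). Once that is in place the coset bookkeeping is routine, and the one hypothesis the argument genuinely uses is $q>2$, which guarantees that $b^{-2}$ occupies a coset distinct from both $P$ and $Pb^{-1}$. The listed conditions on $S_1$ (that $S_1=S_1^{-1}$, or that $|S_1|=4$, $|S_1\cap S_1^{-1}|=3$ and $|\sg{S_1}|=p^2$) are the specific cases that will be fed into this lemma in the subsequent arguments; the faithfulness conclusion itself does not seem to require anything about $S_1$ beyond $1\in S_1\subseteq P$, so I would keep them in the statement only to match the downstream applications.
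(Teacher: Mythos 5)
Your proof is correct, and it takes a genuinely different route from the paper's. You characterize the kernel of $\bar{A}$ acting on $H_0$ globally: it is trivial precisely when no two vertices of $H_1$ are ``twins'' (i.e.\ have equal neighbourhoods), which you translate into the purely multiplicative condition that $S^{-1}g=S^{-1}$ forces $g=1$, and then verify by comparing intersections with the cosets $P$, $Pb^{-1}$, $Pb^{-2}$; the only inputs are $1\in S_1\subseteq P$ and $b^2\neq 1$. The paper instead argues locally at $1_0$ via $4$-cycles: it first shows (also from $b^2\neq 1$) that no $4$-cycle through $1_0$ passes through $b_1$, and then uses the hypotheses on $S_1$ either to show that every other neighbour of $1_0$ does lie on such a $4$-cycle (when $S_1=S_1^{-1}$), or to count these $4$-cycles and single out $1_1$ (when $|S_1|=4$); this yields the stabilizer containments $A_{1_0}\le A_{b_1}$, respectively $A_{1_0}\le A_{1_1}$, which after translation by $R(H)$ give faithfulness. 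Your argument is shorter and proves a strictly stronger statement: as you observe, the hypotheses on $S_1$ play no role, whereas in the paper they are exactly what drives the $4$-cycle analysis. What the paper's proof buys is its intermediate byproducts: the containment $\bar{A}_{1_0}\le \bar{A}_{1_1}$ and the fact that $b_1$ is the unique neighbour of $1_0$ lying on no $4$-cycle through $1_0$ are cited verbatim later, in the proofs of Lemmas~\ref{L-n>1p>2} and~\ref{L-n>4p=2}, which refer back to ``the proof of Lemma~\ref{L-faithful}''; so substituting your proof would require re-deriving those two (easy) facts at the points where they are used.
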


\begin{proof} Let $S=S_1\cup \{b\}$.
We consider the $4$-cycles of $\G$ going through the vertex $1_0,$ and denote by $\C$ the set of all such
$4$-cycles. We claim that no $4$-cycle in $\C$ goes through the vertex $b_1$. Indeed, if such a $4$-cycle existed, then it would be in the form
$$
\big(1_0, b_1, (x^{-1}b)_0=(z^{-1}y)_0, y_1, 1_0\big)
$$
for suitable $x, y \in S_1$ and $z \in S$. Thus, $x^{-1}b=z^{-1}y,$
and since $x,y \in S_1 \subseteq P,$ it follows that $z \notin P,$ hence $z=b$. This, however, implies
$x^{-1}b^2=y^{b} \in P,$ and so $b^2=1,$ which contradicts that $b$ has order $q^m > 2$.

Suppose that $S_1=S_1^{-1}$. If $|S_1|=1$, then $S_1=\{1\}$ and the above paragraph implies that $\Gamma$ is a union of cycles of length at least $5$, which implies that $\bar{A}$ is faithful on $H_0$.  Now assume that $|S_1|\geq 2$. For any $a \in S_1$ with $a \not=1$, we find that
$(1_0,a_1,a_0,1_1) \in \C$. We conclude that $b_1$ is the only neighbor of $1_0$ which is not contained in a $4$-cycle from $\C$.
Therefore, $A_{1_0} \le A_{b_1},$ and hence $A_{h_0}=R(h)^{-1}A_{1_0}R(h) \le R(h)^{-1}A_{b_1}R(h)=
A_{(bh)_1}$ for all $h \in H$. This implies that $\bar{A}$ acts
faithfully on $H_0$.

Now, suppose that $|S_1|=4,$ $|S_1\cap S_1^{-1}|=3$ and
$|\sg{S_1}|=p^2$.
Thus $p>2,$ and $S_1=\{1,a,a^{-1},c\}$ for some $a,c \in H$ such
that $|\sg{a,c}|=p^2$. It follows that
$\C$ contains exactly two $4$-cycles $(1_0,a_1,a_0,1_1)$ and
$(1_0,a^{-1}_1,a^{-1}_0,1_1)$ for $p>3$
and exactly four $4$-cycles $(1_0,a_1,a_0,1_1)$,
$(1_0,a^{-1}_1,a^{-1}_0,1_1)$, $(1_0,a_1,a^{-1}_0,1_1)$ and
$(1_0,a^{-1}_1,a_0,1_1)$ for $p=3$. This implies in turn that
$A_{1_0} \le A_{1_1},$ $A_{h_0} \le
 A_{h_1}$ for all $h \in H,$ and
$\bar{A}$ acts faithfully on $H_0$.
\end{proof}

\begin{lem}\label{L-n=1q>2}
With the above notation, let $n=1, \, q > 2$ and $S_1=\{1,a,a^{-1}\}$
for a non-identity element $a \in P$. Then $A=\bar{A}$.
\end{lem}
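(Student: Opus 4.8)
The plan is to follow the two‑step pattern of Lemma~\ref{L-p}: first show that $N_A(R(H))$ preserves the two bipartite sets, and then upgrade this to all of $A$. Before that, note that $S=\{1,a,a^{-1},b\}$ with $S_1=\{1,a,a^{-1}\}=S_1^{-1}$, so Lemma~\ref{L-faithful} applies and $\bar A$ acts faithfully on $H_0$; moreover $\sg{S}=H$, so $\G$ is connected and $|A:\bar A|\le 2$ with $\bar A\trianglelefteq A$. Thus it suffices to prove $A=\bar A$, i.e.\ that no automorphism interchanges $H_0$ and $H_1$. Throughout I use the structural fact that, since $b$ induces on $P=\sg a$ a fixed‑point‑free automorphism of order $q$ (say $a^b=a^r$ with $r$ of multiplicative order $q$ modulo $p$), every $\a\in\Aut(H)$ fixes $P$ setwise and satisfies $a^\a=a^s$ and $b^\a=a^tb^u$ with $u\equiv 1\pmod q$; this last congruence is the exact point where the hypothesis $q>2$ will bite.

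\medskip
\noindent\textbf{Step 1: $N_A(R(H))\le\bar A$.} Suppose not. By Proposition~\ref{ZF} there is a part‑swapping $\d_{\a,x,y}\in N_A(R(H))$ with $S^\a=y^{-1}S^{-1}x$; composing with a suitable right translation I may assume $1_0^{\d_{\a,x,y}}=1_1$, which by \eqref{Eq-delta} forces $x=1$ and hence $S^\a=y^{-1}S^{-1}$, where $S^{-1}=\{1,a,a^{-1},b^{-1}\}$. Since $1=1^\a\in y^{-1}S^{-1}$, we get $y\in\{1,a,a^{-1},b^{-1}\}$. Now $b^\a=a^tb^u\notin P$ must coincide with the unique non‑$P$ element of $y^{-1}S^{-1}$: for $y\in\{1,a,a^{-1}\}$ this element is $b^{-1}$, $a^{-1}b^{-1}$ or $ab^{-1}$ respectively, each forcing the $b$‑exponent $u\equiv-1\pmod q$, which is impossible because $q>2$; while for $y=b^{-1}$ the set $y^{-1}S^{-1}=\{1,b,ba,ba^{-1}\}$ contains only one element of $P$, so it cannot contain both $a^\a$ and $(a^{-1})^\a$. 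This contradiction proves Step~1.

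\medskip
\noindent\textbf{Step 2: $A=\bar A$.} The strategy is to prove $R(H)\trianglelefteq\bar A$ and then transport this through the index‑$\le 2$ inclusion $\bar A\trianglelefteq A$. To get $R(H)\trianglelefteq\bar A$ I analyse the action of the stabiliser $\bar A_{1_0}$ on $H_0$, equivalently the transitivity module $\S=\R(H,\bar A_{1_0})$. The $4$‑cycles through $1_0$ are exactly $(1_0,a_1,a_0,1_1)$ and $(1_0,a^{-1}_1,a^{-1}_0,1_1)$, so $\bar A_{1_0}$ fixes $b_1$ and $1_1$ and merely permutes $\{a_0,a^{-1}_0\}$ and $\{a_1,a^{-1}_1\}$; feeding this back through the neighbourhoods of $1_1$ and of the newly fixed vertices one shows that $\bar A_{1_0}$ preserves $P$ (so $\un P\in\S$, giving the block system of cosets of $P$) and acts on $H$ as a group automorphism. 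Hence $\bar A_{1_0}\le\Aut(H)$ and $R(H)\trianglelefteq\bar A$, so that $\bar A\le R(H)\rtimes\Aut(H)$. In this holomorph $R(\sg a)$ is characteristic of order $p$ and, using $n=1$ together with the description of $\Aut(H)$ above, $R(H)$ is the unique normal regular subgroup of $\bar A$ isomorphic to $H$. Finally let $\tau\in A$ swap the parts. As $\bar A\trianglelefteq A$, the image $R(H)^\tau$ is again a normal regular subgroup of $\bar A$ isomorphic to $H$, whence $R(H)^\tau=R(H)$ and $\tau\in N_A(R(H))\le\bar A$ by Step~1 — a contradiction. Therefore $A=\bar A$, and $\G$ is not vertex‑transitive.

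\medskip
The main obstacle is Step~2, and within it the claim $R(H)\trianglelefteq\bar A$. Step~1 is a finite, transparent computation once the congruence $u\equiv1\pmod q$ is in hand, but controlling all of $\bar A_{1_0}$ (not just its action on $\G(1_0)$) requires the iterated neighbourhood/transitivity‑module bookkeeping sketched above, and it is here that the three arithmetic hypotheses $n=1$, $q\mid(p-1)$ and $q>2$ are jointly used to exclude extra local symmetries and to force $\bar A_{1_0}\le\Aut(H)$. The subsequent passage from $R(H)\trianglelefteq\bar A$ to $R(H)^\tau=R(H)$ — the uniqueness of the normal regular subgroup — is the other delicate point; if a cleaner finish is wanted one can instead try to show that $R(\sg b)$ is a Sylow $q$‑subgroup of both $\bar A$ and $A$ and conclude by a Frattini argument, thereby reducing everything to the containment $N_A(R(\sg b))\le\bar A$.
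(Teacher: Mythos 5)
Your Step~1 is correct: the congruence $u\equiv 1\pmod q$ for $b^\a=a^tb^u$ is valid (it follows from $(a^b)^\a=(a^\a)^{b^\a}$ and the fact that $r$ has order $q$ modulo $p$), and the four cases for $y$ are disposed of properly. In fact this is essentially the computation the paper performs at the very end of its own proof, where the contradiction is obtained as $r^2\equiv 1\pmod p$; your congruence argument is a clean equivalent, and both hinge on $q>2$. The closing uniqueness argument (any regular subgroup of $\bar A$ must equal $R(H)$, since $|H|$ is odd and $|\bar A:R(H)|\le 2$) is also fine once its hypothesis is available.

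The genuine gap is the core claim of Step~2: that $\bar A_{1_0}$ "acts on $H$ as a group automorphism", equivalently $R(H)\trianglelefteq\bar A$. Everything you derive honestly from the $4$-cycles is local: $\bar A_{1_0}$ fixes $1_1$ and $b_1$ and preserves $\{a_0,a^{-1}_0\}$ and $\{a_1,a^{-1}_1\}$. The whole difficulty of the lemma is the passage from this local information to control of the action of $\bar A_{1_0}$ on all of $H_0$ — in particular, excluding elements that are trivial near $1_0$ but nontrivial far away, i.e.\ bounding $|\bar A_{1_0}|$. Your proposal covers this with the single phrase "feeding this back through the neighbourhoods \dots one shows", which is an assertion of the conclusion, not a proof. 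The paper spends the bulk of its argument exactly here: it passes to the transitivity module $\S=\R(H,L_1)$, shows $\un{\{a,a^{-1}\}},\un{\{b\}}\in\S$, and then splits into two cases; in the nontrivial case $P=\sg{a}$ is a block of imprimitivity, the setwise stabilizer $L_{\{P\}}=L_1R(P)$ is shown to act \emph{faithfully} on $P$ (via Proposition~\ref{P-tmodule2}(iv), the commuting left translations $L(b^i)$), and then Burnside's theorem on transitive groups of prime degree is invoked: $L_{\{P\}}^P$ is doubly transitive or solvable, and the orbit $\{a,a^{-1}\}$ of size $2<p-1$ rules out double transitivity, forcing $|L_1|=2$ and hence $R(H)\trianglelefteq L$. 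Your sketch contains neither this machinery nor any substitute for it (one could instead run a connectivity/unique-common-neighbour propagation argument coset by coset, together with the explicit automorphism $\s_{\a,1}$ for $\a\colon a\mapsto a^{-1},\,b\mapsto b$, but that argument also has to be written out; it does not follow formally from the $4$-cycle data). Note finally that your suggested fallback — a Frattini argument with $R(\sg{b})$, which is what the paper actually does — suffers from the same missing ingredient: to know $R(\sg{b})$ is a Sylow $q$-subgroup of $\bar A$ one needs precisely the bound $q\nmid|\bar A_{1_0}|$, i.e.\ the control of the stabilizer that is the unproven heart of Step~2.
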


\begin{proof}
Let $S=S_1 \cup \{b\},$ and hence $\G=\H(H,S)$.
Notice that, since $q>2$, it follows that $p>5$. In this case,
$a^b=a^r$ for some integer $r$ coprime to $p$ such that $r$ has multiplicative order $q$ modulo $p$.

We are going to show below that $R(P) \unlhd A$.
It is sufficient to show that $R(P) \; \mathrm{char} \; \bar{A}$.
Let $L = f^{-1} \bar{A}^{H_0} f,$ where $\bar{A}^{H_0}$ denotes the permutation group of $H_0$ induced by $\bar{A}$ acting on
$H_0,$ and $f : H_0 \mapsto H$ is the bijective
mapping  $f : h_0 \mapsto h, \, h \in H$.
By Lemma~\ref{L-faithful}, $L \cong \bar{A}$.
We warn the reader that, in what follows $R(P)$ will also denote the
permutation group of $H$ consisting of the right translations
$x \mapsto xh, \, x \in H, \ h \in P$. It is sufficient to show that
$R(P) \; \mathrm{char} \; L$.

Let us consider the transitivity module $\S=\R(H,L_1)$.
It is not hard to show, considering  the $4$-cycles of $\G$ through $1_0,$ that $\bar{A}_{1_0} \le \bar{A}_{1_1}$. Since the neighbors of $1_1$ are  $1_0, \, a_0,\, a^{-1}_0$ and $b^{-1}_0,$
we obtain that $\un{\{a,a^{-1},b^{-1}\}} \in \S$.
By Propositions~\ref{P-tmodule1}~(ii) and \ref{P-tmodule2}~(i),
$\un{\{a,a^{-1}\}} = \un{\{a,a^{-1},b^{-1}\}} \circ \un{\{a,a^{-1},b\}} \in \S,$ and thus also $\un{\{b\}} \in \S$.
By Eq.~\eqref{Eq-H}, $H=\sg{a,b}$. Thus if  $\un{\{a\}} \in \S,$ then it follows by this and
Proposition~\ref{P-tmodule2}~(ii) that $\S=\Z H$. Hence $L=R(H),$ and $R(P) \; \mathrm{char} \; L,$ as required.
Now, suppose that $\un{\{a,a^{-1}\}}$ is a basic quantity of $\S$.
By Proposition~\ref{P-tmodule2}~(iii), $P=\sg{a,a^{-1}}$ is a block of imprimitivity for $L$. Let $L_{\{P\}}$ denote the setwise stabilizer of the block $P$ in $L$. Since $L_1$ fixes $P$ setwise, it follows that $L_{\{P\}}=L_1R(P)$.
Let $\r \in L_{\{P\}}$ such that $x^\r=x$ for all $x \in P$. Since,
$\un{\{b^i\}} \in \S,$ for every $b^i \in Q,$ by Proposition~\ref{P-tmodule2}~(iv), $\r L(b^i) = L(b^i) \r,$ and we find
$(b^{-i}x)^\r=x^{L(b^i) \r}=x^{\r L(b^i)}=b^{-i}x$ for any $x\in P$. Thus $\r$ is the identity mapping,
so $L_{\{P\}}$ acts faithfully on $P,$ and $L_{\{P\}}$ can be regarded as a permutation group of $P$.
By Burnside's theorem on transitive permutation groups of degree $p$ (see \cite[Theorem~3.5B]{DM}), $L_{\{P\}}$ is
doubly transitive on $P$ or it is solvable. On the other hand,
$\{a,a^{-1}\}$ is an orbit under $L_1=(L_{\{P\}})_1$ where $|P|=p > 5$.  All these show that $L_{\{P\}}$ is a solvable group. This implies that $|(L_{\{P\}})_1|=|L_1|=2,$ and we
obtain that $R(H)$ is normal in $L$. Since $R(P)$ is characteristic in $R(H)$, it is a normal Sylow $p$-subgroup of $L$, in particular, it is characteristic in $L$. We have shown that $R(P) \unlhd A$.

Suppose to the contrary that $A \ne \bar{A}$.
Since $|\bar{A}_1|=|L_1| \le 2,$ it follows that $R(Q)$ is a Sylow $q$-subgroup of $\bar{A}$. By the Frattini argument, $N_A(R(Q)) \bar{A}=A,$ and therefore, $N_A(R(Q)) \setminus \bar{A} \ne \emptyset$. Choose $\r \in N_A(R(Q)) \setminus \bar{A}$.
Since $R(P) \unlhd A,$ $\r$ normalizes $R(P)$ as well, hence also
$R(P)R(Q)=R(H)$.
By Eq.~\eqref{Eq-delta}, $\r=\d_{\a,x,y}$ for some $\a \in \Aut(H)$ and $x,y \in H$ that satisfy  $yS^\a x^{-1}=S^{-1}$. Let $\iota_{x^{-1}}$ denote the inner automorphism of $H$
induced by $x^{-1}$. Then $y S^\a x^{-1}=yx^{-1}
S^{\a \iota_{x^{-1}}}$. Replacing $yx^{-1}$ with $y$ and
$\a \iota_{x^{-1}}$ with $\a,$ we obtain that
$y S^\a= S^{-1},$ that is,
$$
\{1^\a,a^\a,(a^{-1})^\a,b^\a\}=
y^{-1}\{1,a,a^{-1},b^{-1}\},
$$
implying that $y=1,  a,  a^{-1}$ or $b^{-1}$.
Using this condition, that $a^\a$ and $(a^{-1})^\a$ are in
$y^{-1}\{1,a,a^{-1},b^{-1}\},$ and  that $p>3,$ we obtain that
$y=1,$ and $\a$ satisfies $a^\a=a^{\pm 1}$ and $b^\a=b^{-1}$.  Recall that,  $a^b=a^r$ for some integer $r$ coprime to $p$ such that
$r$ has multiplicative order $q$ modulo $p$. We can write
 $a^{\pm r}=(a^r)^\a=(b^{-1}ab)^\a=b a^{\pm 1} b^{-1}$.
Then, $b^{-1} a^{\pm r} b=a^{\pm 1},$ and it follows that
$r^2 \equiv 1 \pmod p$. This contradicts that $r$ has multiplicative order $q > 2$ modulo $p$.  This completes the proof of the lemma.
\end{proof}

\begin{lem}\label{L-n>1p>2}
With the above notation, let $n>1, \, p > 2$ and
$S_1=\{1,a,a^{-1},a^b\}$ for a non-identity element $a \in P$. Then
$A=R(H)$.
\end{lem}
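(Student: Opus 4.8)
The plan is to establish the two statements $\bar A=R(H)$ and $A=\bar A$ separately, so that $A=R(H)$ follows. For the first I would run the transitivity-module machinery of Lemma~\ref{L-n=1q>2}. First note that $n>1$ together with $n<q$ forces $q>2$, so the hypotheses of Lemma~\ref{L-faithful} are within reach: since conjugation by $b$ is fixed-point-free and $\sg{a^Q}=P$ has rank $n>1$, the element $a^b$ lies outside $\sg{a}$, so $a,a^{-1},a^b$ are pairwise distinct, $\sg{S_1}=\sg{a,a^b}$ has order $p^2$, and $S_1^{-1}=\{1,a^{-1},a,a^{-b}\}$ meets $S_1$ exactly in $\{1,a,a^{-1}\}$, giving $|S_1\cap S_1^{-1}|=3$. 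Thus Lemma~\ref{L-faithful} applies and $\bar A$ acts faithfully on $H_0$. I then transfer $\bar A$ to a permutation group $L\cong\bar A$ of $H$ with $R(H)\le L$ transitive, and work in $\S=\R(H,L_1)$.

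Next I would extract basic quantities. The $4$-cycle analysis of Lemma~\ref{L-faithful} gives $\bar A_{1_0}\le\bar A_{1_1}$, so $L_1$ fixes $1_1$ and permutes its neighbours; reading these off in $H_0$ yields $\un{\{a,a^{-1},a^{-b},b^{-1}\}}\in\S$. Taking the Schur--Hadamard product with its inverse $\un{\{a,a^{-1},a^{b},b\}}$ (Proposition~\ref{P-tmodule2}(i)) isolates the common part $\un{\{a,a^{-1}\}}\in\S$, and hence $\un{\{a^{-b},b^{-1}\}}\in\S$. The decisive point is that the argument is finished once one knows $\un{P}\in\S$: then $\un{\{a^{-b}\}}=\un{\{a^{-b},b^{-1}\}}\circ\un{P}$ and $\un{\{b^{-1}\}}$ are singleton quantities of $\S$, and since $\sg{a^{-b},b^{-1}}=H$, Proposition~\ref{P-tmodule2}(iv) shows that every left translation lies in $C_{S_H}(L)$, i.e.\ $L(H)\le C_{S_H}(L)$. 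As $C_{S_H}(L(H))=R(H)$, this forces $L\le R(H)$, so $L=R(H)$ and $\bar A=R(H)$.

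The main obstacle is exactly the step $\un{P}\in\S$. In Lemma~\ref{L-n=1q>2} the analogous separation was automatic, because for $S_1=\{1,a,a^{-1}\}$ the leftover quantity was the clean singleton $\un{\{b^{-1}\}}$; here the extra generator $a^b$ binds the $p$-element $a^{-b}$ and the $q$-element $b^{-1}$ into the single quantity $\un{\{a^{-b},b^{-1}\}}$, and neither the Schur--Hadamard calculus nor the autocorrelation $\un{S^{-1}}\cdot\un{S}$ separates them (both are ``$4$-cycle-free'' neighbours of $1_0$, so every local invariant agrees on them). I would instead obtain $\un{P}\in\S$ by proving $R(P)\unlhd L$ through the Sylow structure: once $p\nmid|L_1|$, the group $R(P)$ is a Sylow $p$-subgroup of $L$, and then $R(H)\le N_L(R(P))$ forces the number of Sylow $p$-subgroups to divide $|L_1|$ while being $\equiv 1\pmod p$, which yields $R(P)\unlhd L$ immediately for $p\ge 5$; the case $p=3$ can genuinely occur (for instance $H=\Z_3^{3}\rtimes\Z_{13}$) and would have to be settled on its own, in the spirit of the $p=3$ analyses in Lemmas~\ref{L-faithful} and~\ref{L-p}. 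The genuinely hard sub-task inside this is bounding $|L_1|$ so as to secure $p\nmid|L_1|$; once $R(P)\unlhd L$, the $R(P)$-orbit $P$ of $1$ is $L_1$-invariant, giving $\un{P}\in\S$ and completing $\bar A=R(H)$.

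Finally I would rule out a part-swapping automorphism to upgrade $\bar A=R(H)$ to $A=R(H)$. As $\G$ is connected, $|A:\bar A|\le 2$ and $\bar A\unlhd A$; if $A\ne\bar A$ then by Proposition~\ref{ZF} there is some $\d_{\a,x,y}\in{\rm I}$, i.e.\ an $\a\in\Aut(H)$ with $S^{\a}=y^{-1}S^{-1}x$. Normalising as in Claim~1 of Lemma~\ref{L-p}, I may assume $x=1$ and $y\in S^{-1}$, and then compare the two sides using that $\a$ preserves orders, so the unique element of order $q^m$ on each side, namely $b^{\a}$ and $b^{-1}$, must match, together with the fact that conjugation by $b$ is a fixed-point-free automorphism of $P$ of order $q>2$. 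The resulting constraints force this automorphism to satisfy an order-$2$ relation, which is impossible since $q>2$, exactly as in the closing computation of Lemma~\ref{L-n=1q>2}. This contradiction gives ${\rm I}=\emptyset$, hence $A=\bar A=R(H)$.
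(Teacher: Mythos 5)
Your reduction to the transitivity module $\S=\R(H,L_1)$ is set up correctly --- the verification that Lemma~\ref{L-faithful} applies, and the derivation of $\un{\{a,a^{-1}\}}\in\S$ and $\un{\{c^{-1},b^{-1}\}}\in\S$ (where $c=a^b$), coincide with the paper --- but the proposal breaks down at exactly the point you flag. Your claim that ``neither the Schur--Hadamard calculus nor the autocorrelation separates'' $c^{-1}$ from $b^{-1}$ is false, and the paper's proof consists precisely of such a separation: since $c\notin\sg{a}$, one has $b^{-1}\{a,a^{-1}\}b=\{c,c^{-1}\}$ and $c^{-1}\{a,a^{-1}\}c=\{a,a^{-1}\}$, while neither cross product $b^{-1}\{a,a^{-1}\}c$ nor $c^{-1}\{a,a^{-1}\}b$ meets $\{b,c\}$, whence
$$
\bigl(\un{\{b^{-1},c^{-1}\}}\cdot\un{\{a,a^{-1}\}}\cdot\un{\{b,c\}}\bigr)\circ\un{\{b,c\}}=\un{\{c\}}.
$$
A triple product in $\S$ is a walk count, not a distance-two invariant, and the non-commutation $b^{-1}ab=c\neq a=c^{-1}ac$ is exactly what tells $b$ apart from $c$. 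From $\un{\{c\}},\un{\{b\}}\in\S$ one gets $\un{\{c^{b^i}\}}\in\S$ for all $i$ by Proposition~\ref{P-tmodule2}(ii); these elements generate $H$, so $\S=\Z H$ and $\bar{A}=R(H)$.

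Because you dismiss this route, your proof of $\bar{A}=R(H)$ rests entirely on the Sylow-theoretic substitute, and that substitute is not a proof. First, you never establish $p\nmid|L_1|$ and explicitly concede that bounding $|L_1|$ is ``the genuinely hard sub-task''; note that a vertex stabilizer in the automorphism group of a connected graph need not act faithfully on a neighbourhood (the faithfulness statement in the paper applies only to the subgroup ${\rm F}$ inside $N_A(R(H))$), so no bound on $|L_1|$ comes for free --- obtaining such control is precisely what the S-ring computation accomplishes. Second, even granting $p\nmid|L_1|$, the congruence $n_p\equiv 1\pmod{p}$ together with $n_p\mid|L_1|$ yields $n_p=1$ only if $|L_1|$ has no divisor greater than $1$ congruent to $1$ modulo $p$, which again requires control of $|L_1|$; it is not ``immediate for $p\ge 5$''. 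Third, the case $p=3$, which you admit does occur, is deferred with no argument at all. So the central claim of the lemma is left unproved. (Your second half, ruling out $A\neq\bar{A}$, is in the right spirit but also thinner than the paper: the paper extracts $y^\a=y$ and $\a^2=\iota_y$ from $\r^2\in R(H)$ and needs these in the case $y=a^{\pm 1}$, which ends not in an ``order-$2$ relation'' but in $p=3$, $n=2$, $q\mid p^n-1=8$, contradicting $q>2$.)
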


\begin{proof}
Let $S=S_1 \cup \{b\},$ and hence $\Gamma=\H(H,S)$.
We set $c=a^b$. Since $P=\sg{a^Q}$ and $n>1,$ it follows that
$c \notin \sg{a}$.

Let $L = f^{-1} \bar{A}^{H_0} f,$ where $\bar{A}^{H_0}$ denotes the permutation group of $H_0$ induced by $\bar{A}$ acting on
$H_0,$ and $f : H_0 \mapsto H$ is the bijective
mapping  $f : h_0 \mapsto h, \, h \in H$.
By Lemma~\ref{L-faithful}, $L \cong \bar{A}$.

We prove below that $L=R(H)$. Here again, $R(H)$ denotes the
the permutation group of $H$ consisting of the right translations
$x \mapsto xh, \, x,h \in H$. This is equivalent to show
that the transitivity module $\R(H,L_1)=\Z H$.
For sake of simplity we set $\S=\R(H,L_1)$.
It has been shown in the proof of Lemma~\ref{L-faithful} that
$\bar{A}_{1_0} \le\bar{A}_{1_1}$. Since the neighbors of $1_1$ are  $1_0, \, a_0,\, a^{-1}_0, \, c^{-1}_0$ and $b^{-1}_0,$
we obtain that $\un{\{a,a^{-1},c^{-1},b^{-1}\}} \in \S$.

By Propositions~\ref{P-tmodule1}~(ii), $\un{\{a,a^{-1}\}}=\un{\{a,a^{-1},c^{-1},b^{-1}\}} \circ \un{\{a,a^{-1},c,b\}} \in \S$, and hence $\un{\{b^{-1},c^{-1}\}}\in \S$ and  $\un{\{b,c\}}\in \S$ by Proposition~\ref{P-tmodule2}~(i). Since $c \notin \sg{a}$, both $b$ and $c$ are not belong to  $b^{-1}\{a,a^{-1}\}c$ and $c^{-1}\{a,a^{-1}\}b$, and since $b^{-1}\{a,a^{-1}\}b=\{c,c^{-1}\}$ and $c^{-1}\{a,a^{-1}\}c=\{a,a^{-1}\}$, we find $
(\un{\{b^{-1},c^{-1}\}} \cdot \un{\{a,a^{-1}\}} \cdot \un{\{b,c\}})
\circ \un{\{b,c\}} = \un{\{c\}}$.
Thus,  $\un{\{c\}}, \un{\{b\}} \in \S,$ and
by Proposition~\ref{P-tmodule2}~(ii),
$\un{\{c^{b^i}\}} \in \S$ for all $i \in \{0,1,\ldots,q-1\}$. As the latter elements generate $H,$ we conclude that $\S=\Z H,$ and so
$L=R(H),$ as required.

Now, since $\bar{A} \cong L,$ we obtain that
$\bar{A}=R(H),$ the permutation group of $V(\G)$ induced by the right translation.  It follows that, either $A=R(H),$ or $|A : R(H)|=2$ and
$A$ acts regularly on $V(\G)$. We finish the proof by showing that the latter possibility leads to a contradiction.

Assume that $|A : R(H)|=2$. Then, one can choose $\r \in A$ such that $1_0^\r=1_1$. Since $\r$ normalizes $R(H),$ we can write $\r=\d_{\a,x,y}$ for some $x,y \in H$ and $\a \in \Aut(H)$ that satisfy $yS^\a x^{-1}=S^{-1},$ see Proposition~\ref{ZF}. Since $1_0^\r=1_1,$ it follows that $x=1$. By Eq.~\eqref{Eq-delta}, $(1_0)^{\r^2}=y_0$ and $(1_1)^{\r^2}=(y^\a)_1$. Using these and that
$\r^2 \in R(H),$ we conclude that $\r^2=R(y)=R(y^\a)$ and so $y^\a=y$. Also, $(hy)_0=(h_0)^{\r^2}=(y h^{\a^2})_0$. It follows that
$\a^2=\iota_y,$ the inner automorphism of $H$ induced by $y$.

Since $S^\a=y^{-1}S^{-1},$ we can write
$$
\{1^\a,a^\a,(a^{-1})^\a,c^\alpha,b^\a\}=
y^{-1}\{1,a,a^{-1},c^{-1},b^{-1}\},
$$
implying that  $y=1, a, a^{-1}, c^{-1}$ or $b^{-1}$.

Let $y=1$. Then,  $a^\a=a^{\pm 1}, \, c^\a=c^{-1}$ and $b^\a=b^{-1}$.  We can write
$c^{-1}=c^\a=(b^{-1}ab)^\a=b a^{\pm 1} b^{-1},$ from which $a^{b^2}=b^{-1} c b=a^{\pm 1}$. This implies that $b^4$ fixes $a,$ hence $q=2$. Then, by Eq.~\eqref{Eq-H}, $n < q=2,$ a contradiction.

Let $y=a^{\pm 1}$. Since $\a$ fixes $y,$ it follows that
$a^\a=a$. Thus $\{a,a^{-1}\} \subset \{a^{-1},a^{-2},$ $a^{-1}c^{-1},a^{-1}b^{-1}\}$ or $\{a,a^2,ac^{-1},ab^{-1}\}$.
Since $c \notin \sg{a},$ we find $p=3,$
$c^\a=a^{\pm 1}c^{-1}$ and
$b^\a=a^{\pm 1}b^{-1}$. We can write
$a^{\pm 1}c^{-1}=c^\a=(b^{-1}ab)^\a=b a b^{-1},$ from which $c^b=a^{-1}c^{\pm1}$.
This gives that $H=\sg{a,a^b,\ldots,a^{b^{q-1}}}=
\sg{a,c}$. Thus $n=2,$ $p^n-1=8,$  and as $q \mid (p^n-1),$
see Eq.~\eqref{Eq-H}, it follows
that $q=2$ and $n=1,$ a contradiction.

If $y=c^{-1},$ then it follows that
$\{a^\a,(a^{-1})^\a\} \not\subset c S^{-1}$.  Finally, let $y=b^{-1}$. Then $b S^{-1} \cap P=\{1\}$.
On the other hand, $a^\a \in P^\a=P,$ and so
$b S^{-1} \cap P=S^\a \cap P \ne
\{1\},$ a contradiction. This completes the proof of the lemma.
\end{proof}

\begin{lem}\label{L-n>4p=2}
With the above notation, let $n>4, \, p=2$ and
$$
S_1=\begin{cases} \{1,a,a^b,a^{b^2}\} &
\text{if } aa^{b^2} \in a^Q \\
\{1,a,a^b,a^{b^2},a^ba^{b^3}\} & \text{if } a a^{b^2} \notin a^Q
\end{cases}
$$
for a non-identity element $a \in P$. Then $A=R(H)$.
\end{lem}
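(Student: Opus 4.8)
The plan is to follow the template of Lemmas~\ref{L-n=1q>2} and \ref{L-n>1p>2}: first pass to the bipartition-preserving subgroup $\bar A$, show $\bar A = R(H)$ by proving that the transitivity module of a point stabilizer is the whole group ring, and finally rule out an automorphism interchanging the two parts. For the setup, note that $p=2$ makes every non-identity element of $P$ an involution, so $S_1 = S_1^{-1}$ holds automatically, and since $n < q$ with $n>4$ we have $q \ge 7 > 2$. Hence Lemma~\ref{L-faithful} applies and $\bar A$ acts faithfully on $H_0$; writing $L = f^{-1}\bar A^{H_0} f \cong \bar A$ as a permutation group of $H \cong H_0$ with $R(H) \le L$ and $L_1$ the stabilizer of $1$, it suffices to prove $L = R(H)$ and then $A = \bar A$.

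For the first (and technically hardest) part I would examine the $4$-cycles of $\G$ through $1_0$. In contrast to the odd-order cases, here the spoke-neighbour $b_1$ is the unique neighbour of $1_0$ lying on no $4$-cycle through $1_0$ (a $4$-cycle through both $1_0$ and $b_1$ would force $b^2=1$), so $\bar A_{1_0}$ fixes $b_1$ but need not fix $1_1$; thus the clean containment $\bar A_{1_0}\le\bar A_{1_1}$ of the earlier lemmas is unavailable. Fixing the vertex $b_1$ does show that the set $S_1 b$ of its remaining neighbours is invariant, giving $\un{S_1 b}\in\S$ where $\S = \R(H,L_1)$ (and hence $\un{b^{-1}S_1}=\un{(S_1 b)^{-1}}\in\S$ by Proposition~\ref{P-tmodule2}(i)); more generally $\bar A_{1_0}$ preserves the distance-$2$ graph on $H_0$ with its edge-multiplicities, so the pieces $\un{S_1 b}$, $\un{b^{-1}S_1}$ and $\un{S_1 S_1\setminus\{1\}}$ graded by the $Q$-part all lie in $\S$. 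The crux is to split off singletons from these simple quantities via Propositions~\ref{P-tmodule1} and \ref{P-tmodule2}: the multiplicities inside $S_1 S_1$ are governed precisely by whether $a a^{b^2}\in a^Q$, and this is where the case split (together with the extra generator $a^b a^{b^3}$ in the second case) is invoked, while $q\ge 7$ ensures that $a,a^b,a^{b^2},a^{b^3},a^{b^4}$ are pairwise distinct so the relevant Schur-Hadamard intersections collapse to single elements. Once one singleton $\un{\{a^{b^i}\}}$ and $\un{\{b\}}$ have been produced, multiplication by powers of $\un{\{b\}}$ yields $\un{\{a^{b^j}\}}$ for all $j$, and since $\langle a^Q\rangle = P$ and $H = P\langle b\rangle$ these generate $\S = \Z H$; therefore $L_1 = 1$, $L = R(H)$, and $\bar A = R(H)$.

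It remains to exclude an automorphism swapping $H_0$ and $H_1$. As $\bar A = R(H)$ is the bipartition-preserving subgroup of index at most $2$, either $A = R(H)$, or $A$ is regular on $V(\G)$; in the latter case, after replacing $\r$ by a suitable $R(H)$-coset representative we may assume there is $\r\in A$ with $1_0^\r = 1_1$. Since $\r$ normalizes $R(H)=\bar A \unlhd A$, Proposition~\ref{ZF} gives $\r = \d_{\a,x,y}$ with $y S^\a x^{-1} = S^{-1}$, and $1_0^\r = 1_1$ forces $x=1$, whence $S^\a = y^{-1}S^{-1}$ with $S^{-1}=S_1\cup\{b^{-1}\}$. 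Because $\a$ stabilizes the characteristic subgroup $P$ and carries $b$ to an element of order $q^m$, matching the $P$-parts and $Q$-parts of the two sides pins $y$ down to a short list and forces $\a|_P$ to nearly intertwine the $b$-action; combining $\a^2 = \iota_y$ with the fixed-point-free action of $b$ and the inequality $n < q$ then yields a contradiction (it would force $b$ to have order dividing a small bound, against $q\ge 7$). Hence $A = R(H)$.

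I expect the main obstacle to be the transitivity-module step rather than the swapping step: for $p=2$ one loses the containment $\bar A_{1_0}\le\bar A_{1_1}$, so singletons must be teased out of the coarser data $\un{S_1 b}$ and the distance-$2$ multiplicities, and it is exactly here that the dichotomy on $a a^{b^2}$, the extra generator $a^b a^{b^3}$, and the hypothesis $n>4$ must all be used to make the Schur-Hadamard products land on single group elements.
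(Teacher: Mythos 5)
Your overall route is the same as the paper's: invoke Lemma~\ref{L-faithful} (valid here, since $p=2$ gives $S_1=S_1^{-1}$ and $n<q$ with $n>4$ gives $q\ge 7$), prove $\R(H,L_1)=\Z H$ to get $\bar{A}=R(H)$, then exclude an automorphism swapping $H_0$ and $H_1$. Your structural claims at the start of the module step are also correct: $\bar{A}_{1_0}$ fixes $b_1$ rather than $1_1$, so $\un{Y}\in\S$ for $Y=S_1b$, hence $\un{Y^{-1}}\in\S$, and $\un{X}\in\S$ where $X=S_1S_1\setminus\{1\}$ is the set of non-identity elements on $4$-cycles through $1_0$. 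The genuine gap is that the lemma's actual content begins exactly where your write-up stops: you never exhibit a single ring computation that isolates a singleton, and the intermediate targets you name, $\un{\{b\}}$ and some $\un{\{a^{b^i}\}}$, are not what this data yields. The computations that do work land on quite different elements: when $aa^{b^2}\in a^Q$ one extracts $\un{\{acd\}}=\un{\sg{X}}-\un{X}-\un{\{1\}}$ (with $c=a^b$, $d=a^{b^2}$) and then $\un{\{b^2\}}$, because $b^2$ carries the unique coefficient $3$ in $(\un{Y})^2$; this uses $d^b\notin E=\sg{a,c,d}$, i.e.\ Eq.~\eqref{Eq-cond}, which is where $n>4$ enters (mere distinctness of $a,a^b,\ldots,a^{b^4}$, which your $q\ge7$ remark provides, is not enough). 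When $aa^{b^2}\notin a^Q$, a longer chain starting from Eq.~\eqref{Eq-comp1} produces $\un{\{cde\}}$ and $\un{\{b^{-2}a\}}$, and one finishes with $H=\sg{cde,\,b^{-2}a}$. Without computations of this kind, the first half of your argument is a plan, not a proof.

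The swap-exclusion half contains a step that would actually fail. You claim the contradiction follows from ``$\a^2=\iota_y$ together with the fixed-point-free action of $b$ and the inequality $n<q$,'' forcing $b$ to have small order. But these facts are jointly satisfiable, so no contradiction can be derived from them: $P$ is an irreducible module for $\sg{b}$ over the field of two elements, so $P$ may be identified with the field of order $2^n$ with $b$ acting as multiplication by a primitive $q$-th root of unity, and whenever $-1$ is a power of $2$ modulo $q$ (for instance $q=11$, $n=10$, an admissible case of this lemma) the map $x\mapsto x^{2^{n/2}}$ is an \emph{involution} of $P$ inverting the $b$-action, i.e.\ $\b^\a=\b^{-1}$ in the paper's notation. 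Consequently the contradiction must be dug out of the set condition $S_1^\a=yS_1$ combined with $\b^\a=\b^{-1}$: one shows $\a$ must fix some element of $S_1\setminus\{1\}$, and each choice forces either $\a$ to fix two points of a $\b$-cycle of length $q>2$ (impossible when $\a$ inverts $\b$), or a membership of $a^{b^{-1}}$, $d^b$ or $d^{b^2}$ in $E$, contradicting Eq.~\eqref{Eq-cond}. Here again the operative hypothesis is $n>4$; your sketch of this half never uses it, which is symptomatic of the fact that the mechanism you propose cannot close the case.
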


\begin{proof}
Let $S=S_1 \cup \{b\},$ and hence $\Gamma=\H(H,S)$.
We set $c=a^b, \, d=a^{b^2}, \, e=a^b a^{b^3}$ and
$E=\sg{a,c,d}$. Since $n > 2,$ $E \cong \Z_2^3$. Note that,
since $\sg{a^Q}=P$ and $|P| > 16,$ the following conditions  hold:
\begin{equation}\label{Eq-cond}
\{a^{b^{-1}},d^b,d^{b^2}\} \cap E = \emptyset.
\end{equation}

Let $L = f^{-1} \bar{A}^{H_0} f,$ where $\bar{A}^{H_0}$ denotes the permutation group of $H_0$ induced by $\bar{A}$ acting on
$H_0,$ and $f : H_0 \mapsto H$ is the bijective
mapping  $f : h_0 \mapsto h, \, h \in H$.
By Lemma~\ref{L-faithful}, $L \cong \bar{A}$.

We prove below that $L=R(H)$. This is equivalent to show
that the transitivity module $\R(H,L_1)=\Z H$.
For sake of simplicity we set $\S=\R(H,L_1)$.
As before, let $\C$ denote the set of all $4$-cycles of $\G$ through $1_0$. Define the subsets of $H$ as
$$
X=\{h \in H : h_0 \in V(C) \text{ for some }  C \in \C, h\neq 1 \}, \;
\text{and} \;
Y=\{h \in H : b_1 \sim h_0 \text{ in } \Gamma, h\neq 1 \}.
$$
It is clear that $\un{X} \in \S$. Also, as $b_1$ is the only neighbour of
$1_0$ which is not contained in some $C \in \C,$ see the proof of Lemma~\ref{L-faithful}, it follows that $\un{Y} \in \S$ also holds.

Let $aa^{b^2}\in a^Q$. Then $S=\{1,a,b,c,d\}$,
$X=\{a,c,d,ac,ad,cd\}$ and so
$\un{\{acd\}}=\un{\sg{X}}-\un{X}-\un{\{1\}} \in \S$.
Moreover, $Y=\{ab,cb,db,b\}$.
Write $(\un{Y})^2=\sum_{h\in H}c_h h$. Using that $d^b \notin E,$
see Eq.~\eqref{Eq-cond}, we find that $c_{b^2}=3$ and
$c_h < 3$ for all $h \in H \setminus \{b^2\}$. By Proposition~\ref{P-tmodule1}~(i), $\un{\{b^2\}} \in \S$. Since $H=\sg{acd,b^2}$, we have $\S=\Z H,$ as required (note that $q > 2$ because of Eq.~\eqref{Eq-H} and the assumption $n > 4$).

Let $aa^{b^2}\notin a^Q$. Then $S=\{1,a,b,c,d,e\}$, $X=\{a,c,d,e,ac,ad,ae,cd,ce,de\}$ and $Y=\{ab,cb,db,eb,b\}$.
Note that $\sg{X}=\sg{a,c,d,e}=\sg{a,a^b,a^{b^2},a^{b^3}}$. Since $\sg{a^Q}=P$
and $|P|>16$, we have $a^{b^4}\notin \sg{X}$,
and thus  $e^b=da^{b^4}\notin \sg{X}$.
Using this we compute
\begin{equation}\label{Eq-comp1}
(\un{Y^{-1}} \cdot \un{Y}) \circ \un{\sg{X}}=
5 \; \un{\{1\}} + 2 \; \un{\{c,d,e,cd,ce,cde\}}.
\end{equation}
This together with Proposition~\ref{P-tmodule1}~(i)-(ii)
yields that $\un{\{c,d,e,cd,ce,cde\}}\in \S$.
It follows that $\un{\{c,d,e,cd,ce,cde\}}\circ \un{X}=\un{\{c,d,e,cd,ce\}}\in \S$
and $\un{\{cde\}}= \un{\{c,d,e,cd,ce,cde\}}-\un{\{c,d,e,cd,ce\}}\in \S$.
Since $e=(ad)^b=cd^b$ and $d^b \notin
E,$ it follows that $e \notin E$. Using this and that $Y=\{ab,cb,db,eb,b\},$ we compute
$$
(\un{Y^{-1}} \cdot \un{\{c,d,e,cd,ce\}}) \circ \un{Y^{-1}}=
3 \; \un{\{b^{-1},b^{-1}c\}} + 2 \; \un{\{b^{-1}d,b^{-1}e\}}.
$$
This shows that $\un{\{b^{-1},b^{-1}c\}}, \un{\{b^{-1}d,b^{-1}e\}} \in \S$.
Since
$(\un{\{b^{-1},b^{-1}c\}})^2 \circ (\un{\{b^{-1}d,b^{-1}e\}})^2=\{b^{-2}a\},$
we have $\un{\{b^{-2}a\}} \in \S$. Since $e\notin E$,
we have $cde\notin E$, and since $(cde)^{b^{-2}a}=ac\in E$,
we have $(cde)^{b^{-2}a}\neq cde$ and $\sg{cde,b^{-2}a}$ is non-abelian.
Hence $\sg{cde,b^{-2}a}=H$ and we have $\S=\Z H$, as required.

Now, since $\bar{A} \cong L,$ we obtain that
$\bar{A}=R(H),$ the permutation group of $V(\G)$ induced by the right translation. It follows that, either $A=R(H),$ or $|A : R(H)|=2$ and
$A$ acts regularly on $V(\G)$. As in the previous proof, we  finish the proof by showing that the latter possibility leads to a contradiction.

Assume that $|A : R(H)|=2$. It follows in the same way as in
the proof of the previous lemma that $y S^\a=S^{-1}$
for some $\a \in \Aut(H)$ and $y \in H$ such that $y^\a=y$ and $\a^2=\iota_y,$ the inner automorphism of $H$ induced by $y$. We can write
\begin{eqnarray*}
\{1^\a ,a^\a, c^\a, d^\a,b^\a\} &=&
y^{-1}\{1, a, c, d, b^{-1}\}, \ \text{or}  \\
\{1^\a ,a^\a, c^\a, d^\a, e^\a, b^\a\} &=&
y^{-1}\{1, a, c, d, e, b^{-1}\},
\end{eqnarray*}
depending whether $S=\{1,a,c,d,b\}$ or $\{1,a,c,d,e,b\}$. These
imply that  $y=1, a, c, d, e$ or $b^{-1}$.

Since $P^\a=P,$ it follows that
 $|y^{-1} S^{-1} \cap P|=|S^\alpha \cap  P| =|S \cap P| \ge 4$.
This shows that $y \ne b^{-1}$ and $b^\a=y^{-1}b^{-1}=y b^{-1}$ for some $y \in S_1$.
Let $\b$ denote the automorphism of $P$ induced
by the action of $b$ on $P$. Then, for any $x \in P,$
 $x^{\a^{-1} \b \a}=(b^{-1} x^{\a^{-1}} b)^\a=
byxyb^{-1}=x^{\b^{-1}}$. We obtain that $\b^\a=\b^{-1},$ and therefore, $\a$ maps any $Q$-orbit on $P$ to a $Q$-orbit.
\medskip

\noindent{\bf Case~1.} $S=\{1,a,b,c,d\}$.
\medskip

In this case $ad \in a^{Q}$ and $\b,$ viewed as a permutation of $P,$ is written in the form
$$
\b=(a\; c \; d\; d^b \; \ldots \; ad \; \ldots) \; \cdots .
$$

We claim that $\a$ fixes some element in $\{a,c,d\}$. This element
is $y$ if $y \ne 1$ since $y \in \{a,c,d\}$ and $y^\a=y$.
Suppose that $y=1$. Then $\a$ is an involution and
$\{a,c,d\}^\alpha=\{a,c,d\}$. This implies that one of $a,c$ and $d$
is fixed by $\a,$ and the claim follows. In particular, we have $(a^Q)^\alpha=a^Q$.

Let $c^\alpha=c$. Then $\b^{-1}=\b^\a=(a^\a\; c\; d^\a\; \ldots ),$
implying that $a^\a=d$ and $d^\a=a$. Thus, $(ad)^\a=ad,$ and we obtain that $\a$ fixes two points of the cycle $(a\; c \; d\; d^b \; \ldots \; ad \; \ldots)$. This contradicts that $\b^\a=\b^{-1},$ and the latter cycle has length $q >2$.

Let $a^\a=a$. Observe that $c^\a \in S_1 S_1 \subset E$.
On the other hand, $\b^{-1}=\b^\a=(a\; c^\a \; d^\a \; \ldots),$
implying that $c^\a=a^{b^{-1}}$. This together with the previous observation yields $a^{b^{-1}} \in E,$ a contradiction to Eq.~\eqref{Eq-cond}.

Finally, if $d^\a=d,$ then a similar argument yields $d^b=d^{\b}
=d^{(\b^{\a})^{-1}}=c^\a \in E$, which contradicts again
Eq.~\eqref{Eq-cond}.
\medskip

\noindent{\bf Case~2.} $S=\{1,a,b,c,d,e\}$.
\medskip

In this case $ad\notin a^{Q}$ and $\b$ is written in the form
$$
\b=(a\; c \; d\; d^b \; \ldots )(ad \; e \; e^b \; \ldots ) \; \cdots .
$$

It can be shown, using the same argument as in Case~1 and
that $\a$ permutes the $Q$-orbits, that
$\a$ fixes some element in the set $\{a,c,d,e\}$. If this element is
$a$ or $d,$ then we  can copy the argument used above in Case~1.

Let  $c^\a=c$. This implies as above that $a^\a=d$ and $d^\a=a$.
It follows in turn that $(ad)^\a=ad,$ $(ad)^Q$ is mapped
by $\a$ to itself, and $e^\a \ne e$ because $\a$ cannot
fix two points of the cycle $(ad\; e \; e^b \; \ldots )$.
It also follows that $y \ne e,$ and thus $y=1$ or $c$.
If $y=1,$ then $\{a,c,d\}^\a=\{a,c,d\},$ and so $e^\a=e,$ a contradiction. It follows that $y=c$. Then,
$\{a,c,d\}=\{a,c,d\}^\a \subset \{c,ca,cd,ce\}$. Since $e \notin E,$
$\{a,d\}=\{ca,cd\},$ which contradicts that $E=\sg{a,c,d} \cong \Z_2^3$.

Finally, let $e^\a=e$ and assume that none of
$a, c$ and $d$ is fixed by $\a$. It follows that $y=1$ or $e,$ and
thus $(ad)^\alpha \in E$. On the other hand, since
$\beta^\alpha= \beta^{-1}$ and $e^\alpha=e,$ it follows that $(ad)^\alpha=e^{(\b^{\a})^{-1}}=e^{\b}=e^b=(ad)^{b^2}=d d^{b^2},$  and hence $d^{b^2} \in E,$ a contradiction to Eq.~\eqref{Eq-cond}. This completes the proof of the lemma.
\end{proof}

Everything is prepared to prove Theorem~\ref{1}.
\medskip

\begin{proof}[Proof of Theorem~\ref{1}]
We show first that each of $D_6, D_8, D_{10}$ and $Q_8$ belongs to
$\BC$. For the first three groups this follows from \cite[Theorem~8]{EP}. Let $\G=\H(Q_8,S)$ be a Haar graph with $1 \in S$. If $\G$ is disconnected, then $\sg{S} < Q_8$ (see~\cite[Lemma~1(i)]{EP}), and thus $\sg{S}$ is abelian. This implies that $\H(\sg{S},S)$ is a Cayley graph. Since $\G$ is a union of some components each isomorphic to
$\H(\sg{S},S)$, the Haar graph $\G$ is also Cayley.
Hence we assume that $\G$ is connected. By {\sc Magma}~\cite{BCP},  all connected Haar graphs of $Q_8$ are Cayley graphs (note that the Haar graphs of $Q_8$ of valency $7$ or $8$ are isomorphic to $K_{8,8}-8K_2$
or $K_{8,8}$, respectively), and hence $Q_8$ is also in the class $\BC$.

Let $H$ be an inner abelian group such that $H \not\cong D_6, D_8, D_{10}$ and $Q_8$. We finish the proof by showing that $H$ does not belong to $\BC$.
\medskip

\noindent{\bf Case~1.} $H$ is a $p$-group.
\medskip

Then $H=M_p(m,n)$ or $M_p(m,n,1)$.
Now, if $p \geq 3$, then $H \notin \BC$ by Lemma~\ref{L-p}.
Assume that $p=2$. Then $m \geq 2$.

Let $m \geq 3$. We consider the subgroup $N=\sg{b^2}$.
Then $N \unlhd H$ and $H/N=\sg{ aN,bN} \cong M_2(m,1)$ or $M_2(m,1,1)$.
Let $\bar{\G}=\H(H/N,\bar{S})$ with $\bar{S}=\{N,aN,a^{-1}N,bN,abN\})$. By Lemma~\ref{L-p},
$\bar{\G}$ is not vertex-transitive.
If $\bar{S} \bar{x}=\bar{S}$ for some $\bar{x} \in H/N$,
then $\bar{S}$ is a union of some left cosets of $\sg{ \bar{x}}$
in $H/N$, and since $|\bar{S}|=5$,
we have $\bar{x}=N$ (the identity of $H/N$) or $\bar{S}=
\sg{ \bar{x}}$
is a subgroup of $H/N$. Clearly, $\bar{S}=\{N,aN,a^{-1}N,bN,abN\})$
is not a subgroup, and we get $\bar{x}=N$. By a similar argument we can obtain that $\bar{x}=N$ if $\bar{x}\bar{S}=\bar{S}$,
and then Lemma~\ref{L2} implies that $H \notin \BC$.

Let $m=2$. Recall that $H \ne M_2(2,1)$ as $M_2(2,1) \cong D_8$.
Thus, if $n=1$, then $H=M_2(2,1,1)$. In this case we obtain by
the help of {\sc Magma}~\cite{BCP} that the Haar graph
$\H(H,\{1,a,a^{-1},b,ab\})$ is not vertex-transitive.
Hence $H \notin \BC$. Let $n \geq 2$.
Then $H=M_2(2,n)$ or $M_2(2,2,1)$.
We consider the subgroup $N=\sg{ b^{2^2}}$.
Then $N \unlhd H$ and $H/N=\sg{aN, bN} \cong M_2(2,2)$ or $M_2(2,2,1)$. Let $\bar{\G}=\H(H/N,\bar{S})$ with
$$
\bar{S}=\{N,aN,bN,abN,ab^2N,ab^3N\}).
$$
By {\sc Magma}~\cite{BCP}, $\bar{\G}$ is not vertex-transitive.
If $\bar{S}\bar{x}=\bar{S}$ for some $\bar{x} \in H/N$,
then $\bar{S}$ is a union of some left cosets of $\sg{ \bar{x}}$
in $H/N$. In particular, $\bar{x} \in \bar{S}$.
Since $|\bar{S}|=6$, the order of $\bar{x}$ in $H/N$ is $1, 2, 3$ or
$6$. On the other hand, since each element in $\bar{S} \setminus \{N\}$ has order $4,$ we have $\bar{x}=N$.
Similarly, we have that $\bar{x}\bar{S} \neq \bar{S}$ for all non-identity element $\bar{x} \in H/N,$ and thus $H \notin \BC$
by Lemma~\ref{L2}. This completes the proof for $p$-groups.
\medskip

\noindent{\bf Case~2.} $H$ is not a $p$-group.
\medskip

Then $H=\Z_p^n \rtimes \Z_{q^m}$ for distinct primes $p$ and $q$.
In view of Lemmas~\ref{L-n=1q>2}--\ref{L-n>4p=2}, we may assume  that $n=1$ and $q=2,$ or $2 \le n \le 4$ and $p=2$.

Let $n=1$ and $q=2$. Let $N=\sg{b^2}$.
Then, $N \unlhd H$ and $H/N=\sg{aN, bN} \cong D_{2p}$.
Consider the Haar graph
$\bar{\G}=\H(H/N,\bar{S})$ with
$$
\bar{S}=\{N, aN, a^3N, bN, abN, a^2b N, a^4b N\}.
$$
The graph $\bar{\G}$ is not vertex-transitive for $p>5$.
This can be verified by the help of {\sc Magma}~\cite{BCP} for $p=7,$
and it was proved in \cite[Proposition~7]{EP} if $p> 7$.
If $\bar{S}=\bar{S} \bar{x}$ for some $\bar{x} \in N/H,$ then
$\bar{S}$ is a union of left cosets of $\sg{\bar{x}}$.
Since $N \in \bar{S}$ and $|\bar{S}|=7,$ it follows that
$\bar{x}=N$ or $S$ is a subgroup of $N/H$ of order $7$. Clearly, the latter option is impossible, and we get $\bar{x}=N$.
It can be shown in the same way that $\bar{S}=\bar{x} \bar{S}$ forces that $\bar{x}=N,$ and thus $H \notin \BC$ by Lemma~\ref{L2} if $p > 5$.

Now, suppose that $p=3$ or $5$. Since $H \ncong D_6, D_{10},$ we
obtain that $m \ge 2$. Let $N=\sg{b^4}$. Then,
$N \unlhd H$ and $H/N=\sg{aN, bN} \cong \Z_p \rtimes \Z_4$.
Consider the Haar graph
$\bar{\G}=\H(H/N,\bar{S})$ with
$$
\bar{S}=\{N, aN, bN, abN, ab^2N, ab^3 N\}.
$$
We find by {\sc Magma}~\cite{BCP} that $\G$ is not vertex-transitive.
If $\bar{S}=\bar{S} \bar{x}$ for some $\bar{x} \in N/H,$ then
$\bar{S}$ is a union of some left cosets of $\sg{\bar{x}}$, in particular,
$\bar{x}$ has order $1, 2, 3$ or $6$.
Since each element in $\bar{S} \setminus \{N,aN,ab^2N\}$ has order $4,$ it follows that $\bar{x}=N$.
It can be shown in the same way that $\bar{S}=\bar{x} \bar{S}$ forces that $\bar{x}=N,$ and thus $H \notin \BC$ by Lemma~\ref{L2}.

Finally, let $2 \le n \le 4$ and $p=2$. By Eq.~\eqref{Eq-H}, $n < q$ and $q \mid (2^n-1)$. It follows that $q=3$ if $n=2,$ $q=7$ if $n=3,$
and $q=5$ if $n=4$. The group $\GL(n,2)$ has Sylow $q$-subgroup of order $q$. Let $N=\sg{b^q}$. Then $N \unlhd H,$ and for $n=2, 3, 4,$ the quotient group $H/N$ is isomorphic to $\Z_2^2 \rtimes \Z_3 \cong A_4, \Z_2^3 \rtimes \Z_7$ and $\Z_2^4 \rtimes \Z_5,$ respectively,
given by the presentations
\begin{itemize}
\item $\Z_2^2 \rtimes \Z_3=
\big\langle x,y,z \mid x^2=y^2=z^3=1, [x,y]=1, x^z=y, y^z=xy
\big\rangle$,
\item $\Z_2^3 \rtimes \Z_7=
\big\langle x,y,z,v \mid x^2=y^2=z^2=u^7=1, [x,y]=[x,z]=[y,z]=1,  x^u=y, y^u=z, z^u=xy \big\rangle,$
\item $\Z_2^4 \rtimes \Z_5=
\big\langle x,y,z,v,w \mid x^2=y^2=z^2=v^2=w^5=1, [x,y]=[x,z]=[x,v]=[y,z]=[y,v]=[z,v]=1, x^w=v, y^w=xy, z^w=yz, v^w=zv \big\rangle$.
\end{itemize}

Then, we find by the help of
{\sc Magma}~\cite{BCP} that the Haar graph
$\H(H/N,S)$ is not-vertex transitive where $S=\{1,x,z,xyz\}$
for $H/N=\Z_2^2 \rtimes \Z_3,$ $S=\{1,x,u,xyu,xz u\}$ for
$H/N=\Z_2^3 \rtimes \Z_7,$ and $S=\{1,x,w,xyw,xzw\}$
for $H/N=\Z_2^4 \rtimes \Z_5$.  Furthermore, a direct computation yields $S \ne Sx$ and $S \ne xS$ for any non-identity element $x
\in H/N$. Thus, by Lemma~\ref{L2},
$H \notin \BC$. This completes the proof of the theorem.
\end{proof}

We finish the paper with an application of Theorem~\ref{1}, namely,
we show that every non-solvable group has a Haar graph which is not a Cayley graph, that is, each group in the class $\BC$ must be solvable. First, we state a corollary of Theorem~\ref{1}.

Since each non-abelian group has an inner abelian subgroup, we have the following corollary by
Theorem~\ref{1} and Lemma~\ref{L1}.

\begin{cor}\label{C-of-1}
Let $G$ be a group in the class $\BC$. Then the following hold:
\begin{enumerate}[(i)]
\item Each Sylow $p$-subgroup of $G$ with a prime $p\geq3$ is abelian.
\item If $G$ is non-abelian, then $G$ has a subgroup isomorphic to $D_6$,
$D_8$, $D_{10}$ or $Q_8$.
\end{enumerate}
\end{cor}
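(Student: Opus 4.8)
The plan is to derive both statements directly from Theorem~\ref{1} and the subgroup-closure of the class $\BC$ provided by Lemma~\ref{L1}, using the elementary group-theoretic fact that every finite non-abelian group contains an inner abelian subgroup. I would record this fact first, since both parts rely on it: given a non-abelian group $K$, choose among all non-abelian subgroups of $K$ one, say $M$, of smallest order; by minimality every proper subgroup of $M$ is abelian, while $M$ itself is non-abelian, so $M$ is inner abelian by definition.

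For part (ii), suppose $G \in \BC$ is non-abelian. Applying the observation to $G$ yields an inner abelian subgroup $H \le G$. By Lemma~\ref{L1} the class $\BC$ is closed under taking subgroups, hence $H \in \BC$. Now $H$ is inner abelian and lies in $\BC$, so Theorem~\ref{1} forces $H \cong D_6, D_8, D_{10}$ or $Q_8$; this is exactly the assertion of (ii).

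For part (i), let $P$ be a Sylow $p$-subgroup of $G$ with $p \ge 3$. Since $P \le G$, Lemma~\ref{L1} gives $P \in \BC$. Suppose, for contradiction, that $P$ is non-abelian. Then $P$ contains an inner abelian subgroup $H$ by the observation, and $H \le P \le G$ again lies in $\BC$ by Lemma~\ref{L1}. By Theorem~\ref{1}, $H \cong D_6, D_8, D_{10}$ or $Q_8$, which have orders $6, 8, 10$ and $8$, respectively. But $H$ is a subgroup of the $p$-group $P$ with $p \ge 3$, so $|H|$ must be a power of the odd prime $p$; since none of $6, 8, 10$ is a power of an odd prime, this is a contradiction. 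Therefore $P$ is abelian.

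The only point requiring any care is the order bookkeeping in part (i): one must note that none of the four exceptional groups is a $p$-group for an odd prime $p$ (indeed $D_8$ and $Q_8$ are $2$-groups, while $D_6$ and $D_{10}$ are not prime-power groups at all), which is precisely what rules out a non-abelian Sylow $p$-subgroup for $p \ge 3$. Beyond this short verification there is no genuine obstacle: all the substantive work has already been carried out in Theorem~\ref{1}, and the corollary is a brief structural consequence of it together with Lemma~\ref{L1}.
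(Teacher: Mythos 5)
Your proof is correct and follows exactly the paper's intended argument: the paper derives the corollary in one sentence from Theorem~\ref{1}, Lemma~\ref{L1}, and the fact that every non-abelian group contains an inner abelian (i.e.\ minimal non-abelian) subgroup, which is precisely the skeleton you have fleshed out. The order bookkeeping you supply for part (i) --- that none of $D_6$, $D_8$, $D_{10}$, $Q_8$ is a $p$-group for an odd prime $p$ --- is the small verification the paper leaves implicit, and you have it right.
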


\begin{thm}\label{2}
Every finite non-solvable group has a non-Cayley Haar graph.
\end{thm}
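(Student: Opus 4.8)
The plan is to argue by contradiction: suppose $G$ is a finite non-solvable group lying in $\BC$. Since $\BC$ is closed under subgroups (Lemma~\ref{L1}), I may replace $G$ by a \emph{minimal} non-solvable subgroup $M\le G$, i.e.\ one all of whose proper subgroups are solvable; then $M\in\BC$ as well. Picking a maximal normal subgroup $N\lhd M$, the quotient $K=M/N$ is simple, and it is non-abelian (otherwise $N$ would be a proper non-solvable subgroup, contradicting minimality); moreover every proper subgroup of $K$ is the image of a proper subgroup of $M$, hence solvable, so $K$ is a \emph{minimal simple group}. By Thompson's classification, $K$ is one of $\PSL(2,2^p)$, $\PSL(2,3^p)$, $\PSL(2,p)$ for suitable primes $p$, a Suzuki group $\mathrm{Sz}(2^p)$, or $\PSL(3,3)$. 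Refining $1\unlhd N\unlhd M$ to a normal series of $M$ and applying Corollary~\ref{C-of-L1L2} to its top factor, it then suffices to produce, for this single $K$, a subset $R\subseteq K$ with $1\in R$ such that (i) $\H(K,R)$ is not vertex-transitive and (ii) $R\ne Rx$ and $R\ne xR$ for every non-identity $x\in K$. Thus the whole theorem reduces to the claim that \emph{every minimal simple group admits such a subset $R$}.

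To verify (i) I would exploit the bipartite structure. Choosing $R$ with $\sg{R}=K$ makes $\G=\H(K,R)$ connected, so its bipartition $\{K_0,K_1\}$ is unique and every automorphism either fixes or interchanges the two parts. Since $R(K)$ already acts regularly on each part, $\G$ is vertex-transitive \emph{if and only if} some automorphism swaps $K_0$ and $K_1$; hence (i) is equivalent to the non-existence of such a swap. I would rule swaps out in two stages, mirroring the $p$-group arguments of Section~4. First, using the transitivity module $\R(K,\Aut(\G)_{1_0})$ and Proposition~\ref{ZF}, I would show that any part-swapping automorphism must normalise $R(K)$ and is therefore one of the maps $\d_{\a,x,y}$ of Eq.~\eqref{Eq-delta}. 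Second, I would choose $R$ so that no $\a\in\Aut(K)$ and $x,y\in K$ satisfy $R^\a=y^{-1}R^{-1}x$, i.e.\ so that $R$ and $R^{-1}$ are inequivalent under $\Aut(K)$ together with two-sided translation. Condition (ii) is generic and I would build it into the choice of $R$ from the outset, by arranging the stabilisers $\{x:Rx=R\}$ and $\{x:xR=R\}$ to be trivial.

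A concrete $R$ must then be exhibited for each family on Thompson's list. Every minimal simple group is $2$-generated and, by the Feit--Thompson theorem, of even order, so I would take $R$ to consist of the identity together with a small asymmetric generating set containing an involution $t$; such a $t$ produces the $4$-cycle $(1_0,t_1,t_0,1_1)$, and these $4$-cycles are exactly what the paper's method uses (compare Lemma~\ref{L-faithful}) to pin down part-stabilisers. For the bounded cases $\PSL(2,p)$ with small $p$ and $\PSL(3,3)$ the claim can be checked directly with \textsc{Magma}, as in the other computations of Section~4.

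The main obstacle is the uniform treatment of the infinite families $\PSL(2,2^p)$, $\PSL(2,3^p)$, $\PSL(2,p)$ and $\mathrm{Sz}(2^p)$: one must control $\Aut(\G)$ well enough to force every part-swapping automorphism to normalise $R(K)$, and this is delicate because a priori $\Aut(\G)$ could be far larger than $R(K)$. I expect the crux to be showing that the transitivity module $\R(K,\Aut(\G)_{1_0})$ is large enough --- ideally all of $\Z K$ --- to pin $\Aut(\G)$ down; once that is established, the inequivalence of $R$ and $R^{-1}$ and the coset conditions~(ii) should follow from an explicit, family-by-family choice of the generating set.
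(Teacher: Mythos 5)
Your reduction is sound as far as it goes: passing to a minimal non-solvable subgroup $M$ (using Lemma~\ref{L1}), extracting the minimal simple quotient $K=M/N$, invoking Thompson's classification, and appealing to Corollary~\ref{C-of-L1L2} are all legitimate steps. The genuine gap is that everything after this reduction is a plan, not a proof: the entire content of the theorem now rests on exhibiting, for \emph{every} group on Thompson's list, a subset $R$ satisfying (i) and (ii), and for the infinite families $\PSL(2,2^p)$, $\PSL(2,3^p)$, $\PSL(2,p)$ and $\mathrm{Sz}(2^p)$ you give no construction and no verification --- you yourself flag this as ``the main obstacle.'' Moreover, the intermediate step you lean on, namely that any part-swapping automorphism of a connected $\H(K,R)$ must normalize $R(K)$ and hence be some $\d_{\a,x,y}$, is not automatic: Proposition~\ref{ZF} only describes $N_{\Aut(\G)}(R(K))$, and forcing the full automorphism group to normalize $R(K)$ is precisely the delicate work that occupies Lemmas~\ref{L-p}--\ref{L-n>4p=2} of the paper even for groups of very restricted (inner abelian) structure. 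Carrying this out uniformly over $\PSL(2,q)$ and $\mathrm{Sz}(q)$, whose orders are unbounded, is a substantial open-ended task, so the proposal as written does not prove the theorem.

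It is worth seeing how the paper sidesteps this difficulty by a different reduction. Using Corollary~\ref{C-of-1} (Sylow $p$-subgroups of members of $\BC$ are abelian for $p\geq 3$; every non-abelian member contains $D_6$, $D_8$, $D_{10}$ or $Q_8$) together with Burnside's $p$-complement theorem, it shows that a non-solvable $G\in\BC$ would contain a non-solvable $\{2,3,5\}$-subgroup $L$; a non-abelian composition factor $T$ of $L$ is then a simple $\{2,3,5\}$-group, and by the Huppert--Lempken classification the only possibilities are $A_5$, $A_6$ and $\PSU(4,2)$ --- a finite list, each containing $A_4$. Crucially, the connection set is then chosen \emph{inside} $A_4$: with $S=\{1,x,z,xyz\}\subset A_4<T$, the graph $\H(T,S)$ is disconnected with every component isomorphic to the non-vertex-transitive graph $\H(A_4,S)$ already verified in the proof of Theorem~\ref{1}, so it is non-vertex-transitive, and since $1\in S$ any element with $Sx=S$ or $xS=S$ lies in $A_4$ and hence equals $1$. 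No control of automorphism groups of Haar graphs over the simple groups themselves is ever needed. Note also that your route could not simply borrow this trick: for instance $\PSL(2,8)$ on Thompson's list has no subgroup isomorphic to $A_4$ (and $3\nmid|\mathrm{Sz}(2^p)|$), so the infinite families in your approach genuinely demand new arguments.
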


\begin{proof}
Suppose to the contrary that $G$ is a non-solvable group in the class
$\BC$. We claim that $G$ contains a non-solvable $\{2,3,5\}$-subgroup $L$.

The statement holds obviously when
$G$ is a $\{2,3,5\}$-group. Now assume that $|G|$ has a prime divisor $p > 5$. Let $p_1,\ldots, p_m$ be all prime divisors of $|G|$ with $p_i>5$, and let $P_i$ be a Sylow $p_i$-subgroup of $G$ for each $1\leq i\leq m$.
By Corollary~\ref{C-of-1}~(i), $P_i$ is abelian.
Consider the group $M=P_i\sg{g},$ where $g \in N_G(P_i)$ has order $r^m$ for some prime $r$.

If $r=p_i,$ then $g \in P_i,$ and hence $M=P_i$ is abelian. If $r \ne p_i$, then $M$ has a cyclic Sylow $r$-subgroup and Corollary~\ref{C-of-1}~(ii) implies that the $\{p_i,r\}$-group $M$ ($p_i>5$) is abelian. These yield that $P_i \le Z(N_G(P_i)),$ and thus
by Burnside's $p$-complement theorem (see \cite[Theorem~7.4.3]{G}),   $G=K_iP_i,$ where $K_i \unlhd G$ and $K_i \cap P_i=1$.
Set $L=K_1\cap \cdots\cap K_m$. Then $L\unlhd G$,
and since $|L|$ divides $|K_i|$ and $p_i$ does not divide
$|K_i|$ for each $1\leq i\leq m$,
$L$ is a $\{2,3,5\}$-subgroup.
Since $G/L\lesssim G/K_1\times \cdots \times G/K_m\cong
P_1\times \cdots\times P_m$ (see~\cite[Exercises~10, page 14]{G}), $G/L$ is solvable,
and since $G$ is non-solvable, $L$ is non-solvable, as claimed
(recall that, given two groups $A$ and $B,$ we write
$A \lesssim B$ if $A$ is isomorphic to a subgroup of $B$).

Since $L$ is non-solvable, it has a composition factor $T$ which is a
non-abelian simple $\{2,3,5\}$-group. By Corollary~\ref{C-of-L1L2},
in order to arrive at a contradiction, it is enough to show that there exists a subset $S \subset T$ such that the Haar graph $\H(T,S)$ is not vertex-transitive and $S \ne Sx$ and $S \ne xS$ for any non-identity element $x \in T$.

By \cite[Theorem~I]{HL}, $T$ is isomorphic to
one of the following groups: $A_5, \, A_6$ and $\PSU(4,2)$.
It follows that $A_4 < T$. This is obvious if $T=A_5$ or $A_6,$
and can be checked for $T=\PSU(4,2)$ in \cite[p.~24]{Atlas}.

Now, consider the graph $\G=\H(T,S)$ where $S \subset A_4 < T,$
$A_4=\big\langle x,y,z \mid x^2=y^2=z^3=1, [x,y]=1, x^z=y, y^z=xy
\big\rangle$ and $S=\{1,x,z,xyz\}$. Notice that, the graph $\H(\sg{S},S)$ appeared already in the proof of Theorem~\ref{1}. We observed
that $\H(\sg{S},S)$ is not vertex-transitive, this can be checked by {\sc Magma}~\cite{BCP}, and also, $S \ne Sx$ and $S \ne xS$ for any
$x \in A_4, x \ne 1$. This implies at once that $\G$ is not vertex-transitive as well. Suppose that $S=Sx$ or $S=xS$ for $x \in T$.
Since $1 \in S,$ it follows that $x \in S \subset A_4,$ and hence $x=1$.
This completes the proof of the theorem.
\end{proof}

\end{document}